
\documentclass[a4paper,twoside,11pt]{article}
\usepackage{a4wide}
\usepackage[utf8]{inputenc}
\usepackage[english]{babel}
\usepackage{color} 
\usepackage{amsmath,amssymb,amsthm,amsfonts}
\usepackage{bbm}
\usepackage{graphicx}
\usepackage{hyperref}
\usepackage[d]{esvect}
\usepackage{abstract}
\usepackage{xparse}

\usepackage{enumitem}
\newlist{hyp}{enumerate}{1}
\setlist[hyp]{label=(H\arabic*),leftmargin=15mm,labelsep=3mm}

\theoremstyle{plain}
\newtheorem{theorem}{Theorem}
\newtheorem*{theorem*}{Theorem}
\newtheorem{lemma}[theorem]{Lemma}
\newtheorem{corollary}[theorem]{Corollary}
\newtheorem{proposition}[theorem]{Proposition}
\newtheorem{fact}[theorem]{Fact}

\theoremstyle{definition}

\newtheorem*{definition*}{Definition}

\theoremstyle{remark}

\numberwithin{equation}{section}


\def\N{\ensuremath{\mathbb{N}}}

\def\R{\ensuremath{\mathbb{R}}}
\def\Z{\ensuremath{\mathbb{Z}}}

\def\ep{\varepsilon}

\def\E{\ensuremath{\mathbf{E}}}

\def\P{\ensuremath{\mathbf{P}}}

\def\Ind{\ensuremath{\mathbbm{1}}}

\def\to{\rightarrow}

\def\tand{\ensuremath{\text{ and }}}
\def\tif{\ensuremath{\text{ if }}}
\def\tas{\ensuremath{\text{ as }}}
\def\ton{\ensuremath{\text{ on }}}


\def\widebar{\overline} 

\def\Smax{M^S}
\def\SLmax{\widetilde M^S}
\def\Xmax{M^X}
\def\Sabsmax{M^{|S|}}
\def\SLabsmax{\widetilde M^{|S|}}
\def\Xabsmax{M^{|X|}}
\def\DeltaSX{\Delta^{SX}}

\def\gammabar{\widebar{\gamma}}

\DeclareDocumentCommand \TODO { o }%
{%
\IfNoValueTF {#1}%
{{\color{red}TODO}}%
{{\color{red}TODO:#1}}%
}

\bibliographystyle{alpha}

\pagestyle{plain}

\author{\textsc{Pascal Maillard}\thanks{
Département de Mathématiques, Université Paris-Sud, Université Paris-Saclay, 91405 Orsay Cedex, France. e-mail: pascal dot maillard at u-psud dot fr. Partially supported by Grant ANR-14-CE25-0014 (ANR GRAAL).}}
\title{The maximum of a tree-indexed random walk in the big jump domain}
\usepackage[normalem]{ulem}
\date{May 19, 2015}
\begin{document}

\maketitle

\begin{abstract}
 We consider random walks indexed by arbitrary finite random or deterministic trees. We derive a simple sufficient criterion which ensures that the maximal displacement of the tree-indexed random walk is determined by a single large jump. This criterion is given in terms of four quantities : the tail and the expectation of the random walk steps, the height of the tree and the number of its vertices. The results are applied to critical Galton--Watson trees with offspring distributions in the domain of attraction of a stable law.
\end{abstract}

\textbf{Keywords.} tree-indexed random walk; branching random walk; heavy tails; extreme values

\section{Introduction}
Let $S_n = X_1+\cdots+X_n$ be a symmetric random walk with $\P(X_1 > x) \sim x^{-\alpha}$ as $x\to\infty$, for some $\alpha>0$ (here and throughout, we write $a_n \sim b_n$ if $a_n/b_n\to 1$ for two sequences $(a_n)_{n\ge0}$ and $(b_n)_{n\ge0}$ of positive numbers). It is well-known and easy to show that $\Xmax_n = \max(X_1,\ldots,X_n) \asymp n^{1/\alpha}$ as $n\to\infty$. On the other hand, standard random walk theory (see, e.g. \cite{Feller1971}) gives that $S_n = n^{1/(2\wedge\alpha) + o(1)}$ as $n\to\infty$. In other words, $S_n$ and $\Xmax_n$ are (roughly) of the same order if and only if $\alpha \le 2$. 

Now consider a critical, finite variance branching random walk, whose random walk steps are distributed as above. This means that starting from a Galton--Watson tree with offspring distribution of mean 1 and finite variance, we assign iid random variables $X_v$ to each non-root vertex $v$, distributed as above, and let $S_v$ be the sum over all $X_u$ where $u$ runs through all the non-root vertices on the path from the root to $v$. Condition the tree on having $n$ non-root vertices and let $\Xmax_n$ and $\Smax_n$ be the maximum over all $X_v$ and $S_v$, respectively. Of course, $\Xmax_n \asymp n^{1/\alpha}$ as above. As for $\Smax_n$, Kesten~\cite{Kesten1995} proved that if $\alpha > 4$, then $\Smax_n \asymp n^{1/4}$. Subsequently, Janson and Marckert~\cite{Janson2005} showed that in general, $\Smax_n \asymp n^{1/(4\wedge \alpha)}$, so that $\Smax_n \asymp \Xmax_n$ if and only if $\alpha \le 4$.


It is well-known that a critical, finite variance Galton--Watson tree conditioned on having $n$ vertices converges after rescaling to Aldous' \emph{continuum random tree} \cite{Aldous1993}, a random metric space of Hausdorff dimension 2. 
Looking at the above results, one question immediately comes to mind: for a random walk indexed by a large ``$D$-dimensional'' random tree, a notion to be made precise, is it true that $\Smax \asymp \Xmax$ if and only if $\alpha \le 2D$, with $\Smax= \max_v S_v$ and $\Xmax= \max_v X_v$? In this article, we provide a partial response to this question. 

The meaning we give to the ``dimension'' of a tree is very crude and simple: we say that a tree is of dimension (at least) $D>1$ if its height is at most of order $V^{1/D}$, where $V$ is the number of its vertices. We will make this definition precise in two ways, yielding two different settings:
\begin{enumerate}
 \item through a growing sequence of (possibly random) trees of height $H_n$ and number of vertices $V_n$, satisfying for each $\ep>0$, $H_n \le V_n^{1/D+\ep}$ with high probability, and
 \item through a single random tree of height $H$ and number of vertices $V$ satisfying for each $\ep>0$, $\P(H\le V^{1/D+\ep},\,V\ge n) \le C_\ep n^{-\kappa}$ for some large enough $\kappa$.
\end{enumerate}
Under the condition $\alpha < 2D$ (or, $\alpha < D$ for non-centered random walk), we then prove the following:
\begin{itemize}
 \item In Setting 1, we have $\Smax_n/\Xmax_n \to 1$ in probability as $n\to\infty$, with $\Smax_n$ and $\Xmax_n$ being respectively the maximal displacement and the size of the maximal jump in the $n$-th process (Theorem~\ref{th:1})
 \item In Setting 2, we have $\P(\Smax > x) \sim \P(\Xmax > x)$ as $x\to\infty$, where $\Smax$ and $\Xmax$ are as above (Theorem~\ref{th:2})
\end{itemize}
We thus have a very easy to verify sufficient criterion for $\Smax$ to be of the same order as $\Xmax$ (in fact, they are approximately equal for reasons explained below).

We expect the notion of dimension used here to coincide with other notions in typical cases of interest. We illustrate this through the example of critical Galton--Watson trees whose offspring distribution is in the domain of attraction of a stable law (which have dimensions $D\in[2,\infty)$).

We finish this introduction by a review of the existing literature on tree-indexed random walks with heavy tails (without pretending to be exhaustive). To the knowledge of the author, these have only been considered so far only on Galton--Watson trees, under the name of branching random walks. For critical, finite-variance Galton--Watson trees, the results of our Theorem~\ref{th:1} were shown by Janson and Marckert \cite{Janson2005}\footnote{Much more is known for critical, finite-variance Galton--Watson trees: under the condition $\P(|X|>x) = o(x^{-4})$, Janson and Marckert~\cite{Janson2005} showed that a certain exploration process of the branching random walk, called the \emph{discrete snake}, converges after renormalization uniformly to a continuous process called the \emph{Brownian snake} introduced by Le~Gall~\cite{LeGall1993}. This result was first proven for $\alpha > 8$ by Marckert and Mokkadem~\cite{Marckert2003}. On the other hand, for $\alpha \le 4$, Janson and Marckert showed convergence of the discrete snake w.r.t.\ a certain topology similar to Skorokhod's $M_1$-topology to a certain non-continuous process called the \emph{jumping snake} or \emph{hairy snake}.}. Lalley and Shao~\cite{Lalley2013} consider symmetric stable branching L\'evy processes of index $\alpha\in(0,2)$, with critical binary offspring distribution, for which they prove the analogue of our Theorem~\ref{th:2} through analysis of a certain pseudo-differential equation. This work was an inspiration to the current article. It is easy to show that their results can be recovered from our results applied to a discrete skeleton of the branching L\'evy process.

On supercritical Galton--Watson trees (which correspond to $D=\infty$), Durrett~\cite{Durrett1983} considers the maximal displacement at generation $n$ of a tree-indexed random walk with regularly varying tails and shows that it approaches the maximal jump size until generation $n$ as $n\to\infty$. This result is easily recovered by our Theorem~\ref{th:1}. Bhattacharya, Hazra and Roy \cite{Bhattacharya2014} recently extended these results to the collection of extremal particles. B\'erard and Maillard~\cite{Berard2013} considered a supercritical branching random walk with regularly varying tails and with selection of the $N$ maximal particles, for large $N$. Finally, Gantert~\cite{Gantert2000} studied the maximum of supercritical branching random walk with streched exponential tails. 

To conclude, our contributions in this article are the following:
\begin{itemize}
	\item to unify previous results on the maximal displacement of heavy-tailed branching random walks,
	\item to give a simple, transparent proof of these results, and
	\item to generalize them to arbitrary trees satisfying an easily verifiable condition.
\end{itemize}

%

\section{Definitions and statements of results}

The following notation will be used throughout the article. We fix a real-valued random variable $X$, whose law is, for now, arbitrary. We say that a sequence $(b_n)_{n\in\N}$ is a \emph{natural scale sequence}\footnote{This is the same definition as in \cite{Denisov2008} apart from the fact that we require furthermore that $b_n$ is non-decreasing in $n$.} if it is non-decreasing and if the family of random variables $((X_1+\cdots+X_n)/b_n)_{n\in\N}$ is tight, where $X_1,X_2,\ldots$ are iid copies of $X$.

Let $T$ be a finite tree, deterministic or random, with root $\rho$. Denote by $\mathcal V$ the set of vertices and set $\mathcal V^* = \mathcal V\backslash\{\rho\}$. 
We denote by $V=|\mathcal V^*|$ the number of non-root vertices of the tree $T$ and by $H$ its height/depth, i.e. the largest distance between $\rho$ and another vertex.

Let $(X_v)_{v\in\mathcal V^*}$ be iid of the same law as $X$. Set
\[
 \forall v \in\mathcal V: S_v = \sum_{\rho \ne u \le v} X_u,
\]
where $u\le v$ means that $u$ lies on the path from the root to $v$ (including $v$ itself). The collection $(S_v)_{v\in\mathcal V}$ is then called the \emph{random walk indexed by the tree $T$}. 
Let $\mathcal L\subset\mathcal V$ be the subset of leaves (i.e. vertices without descendant) of the tree. We then define,
\begin{align*}
\Smax &= \max_{v\in \mathcal V} S_v, & \SLmax &= \max_{v\in\mathcal L} S_v, & \Xmax &= \max_{v\in \mathcal V^*} X_v\\
\Sabsmax &= \max_{v\in \mathcal V} |S_v|, & \SLabsmax &= \max_{v\in\mathcal L} |S_v|, & \Xabsmax &= \max_{v\in \mathcal V^*} |X_v|.
\end{align*}
Note that trivially, $\SLmax \le \Smax$ and $\SLabsmax \le \Sabsmax$. We further define
\[
 \DeltaSX = \max\left\{|\Smax - \Xmax|,|\SLmax - \Xmax|,|\Sabsmax - \Xabsmax|,|\SLabsmax - \Xabsmax|\right\}.
\]

We will also consider sequences $(T_n)_{n\in\N}$ of random trees, in which case we denote by $V_n,H_n,\Xmax_n,\Smax_n$, etc.\  the objects introduced above corresponding to the tree $T_n$.

\def\Dcrit{D_{\mathrm{crit}}}

We now introduce the assumptions on the class of tree-indexed random walks we will focus on. The assumption on the law of $X$ is
\begin{hyp}
 \item[(XR)] There exists $\alpha > 0$, such that $\P(X > x)$ and $\P(|X| > x)$ are regularly varying\footnote{The definition and basic properties of regularly varying functions are recalled in Section~\ref{sec:reg-var}.} at $\infty$ with index $-\alpha$. In this case, we set 
 \[
 \Dcrit(X) =
 \begin{cases}
  \max(1,\tfrac \alpha 2) & \tif \E[|X|]<\infty\tand \E[X] = 0\\
  \max(1,\alpha) & \text{ otherwise}.
 \end{cases}
\]
\end{hyp}

As for the underlying tree, recall from the introduction that we consider two different settings, the first involving a sequence $(T_n)_{n\in\N}$ of growing random trees, the second involving a fixed, random tree $T$. In the first setting, the assumptions on the sequence $(T_n)_{n\in\N}$ are

\begin{hyp}
 \item[(Tn1)] $V_n\to+\infty$ in probability, as $n\to\infty$. 
 \item[(Tn2)] There exists $D>1$, such that for every $\ep > 0$, 
 \[
 \P(H_n > V_n^{\frac 1 D+\ep})\to 0\quad\tas n\to\infty.
 \]
\end{hyp}

We then have the following result:

\begin{theorem}
\label{th:1}
 Assume (XR), (Tn1) and (Tn2). Assume that $D > \Dcrit(X)$, with $D$ and $\Dcrit(X)$ from (Tn2) and (XR), respectively.
 Then
 \begin{align*}
 \lim_{n\to\infty} \frac{\Smax_n}{\Xmax_n} = \lim_{n\to\infty} \frac{\SLmax_n}{\Xmax_n} = \lim_{n\to\infty} \frac{\Sabsmax_n}{\Xabsmax_n} = \lim_{n\to\infty} \frac{\SLabsmax_n}{\Xabsmax_n} = 1,\quad\text{in probability.}
 \end{align*}
 In particular, if $V_n/n\to 1$ in probability as $n\to\infty$ and if $a_n$ and $\tilde a_n$ are such that $\P(X>a_n) \sim 1/n$ and $\P(|X|>\tilde a_n) \sim 1/n$ as $n\to\infty$, then the random variables $a_n^{-1} \Smax_n$, $a_n^{-1}\SLmax_n$, $\tilde a_n^{-1}\Sabsmax_n$ and $\tilde a_n^{-1}\SLabsmax_n$ converge in law as $n\to\infty$ to a Pareto law on $[0,\infty)$ with distribution function $F(x) = \exp(-x^{-\alpha})$.
\end{theorem}

In the second setting, the assumptions on the law of the tree $T$ are the following. Here, the generating function of $V$ is denoted by $g_V(s) = \E[s^V]$.
\begin{hyp}
 \item[(T1)] There exists $\beta \in (0,1]$, such that $1-g_V(1-s)$ is regularly varying at $0$ with index~$\beta$.
 \item[(T2)] There exists $D>1$ and $\gammabar > 1\vee\alpha$, with $\alpha$ from (XR), such that for every $\ep > 0$, 
 \[
 \P(H > V^{\frac 1 D+\ep},\,V \ge n)\times n^{\beta\gammabar}\to 0\quad\tas n\to\infty.
 \]
 Here $\beta$ is the constant from (T1).
\end{hyp}
Our result under the previous hypotheses is the following:


\begin{theorem}
\label{th:2}
 Assume (XR), (T1) and (T2), Assume that $D > \Dcrit(X)$, with $D$ and $\Dcrit(X)$ from (T2) and (XR), respectively. Then, with $g_V$ from (T1),
 \begin{align*}
  &\P(\Smax > x) \sim \P(\SLmax > x) \sim \P(\Xmax > x) = 1-g_V(1-\P(X>x))&&\tas x\to\infty\\ \tand\quad&\P(\Sabsmax > x) \sim \P(\SLabsmax > x) \sim \P(\Xabsmax > x) = 1-g_V(1-\P(|X|>x))&&\tas x\to\infty.
 \end{align*}
In fact, we have
 \[
  \lim_{x\to\infty} \frac{\P(\DeltaSX> x)}{\P(\Xmax > x)} = \lim_{x\to\infty} \frac{\P(\DeltaSX> x)}{\P(\Xabsmax > x)} = 0.
 \]
\end{theorem}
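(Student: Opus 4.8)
For the deduction of the three chains of equivalences from the displayed limit: by (XR) the functions $\P(X>x)$ and $\P(|X|>x)$ are regularly varying of index $-\alpha$, and by (T1) the function $s\mapsto 1-g_V(1-s)$ is regularly varying of index $\beta$ at $0$, so the compositions $\P(\Xmax>x)=1-g_V(1-\P(X>x))$ and $\P(\Xabsmax>x)=1-g_V(1-\P(|X|>x))$ are regularly varying of index $-\alpha\beta$ at $\infty$. Since $|\Smax-\Xmax|\le\DeltaSX$, $\SLmax\le\Smax$ and $|\SLmax-\Xmax|\le\DeltaSX$, for any $\eta>0$ one has $\P(\Xmax>(1+\eta)x)-\P(\DeltaSX>\eta x)\le\P(\SLmax>x)\le\P(\Smax>x)\le\P(\Xmax>(1-\eta)x)+\P(\DeltaSX>\eta x)$; dividing by $\P(\Xmax>x)$, using regular variation (so $\P(\Xmax>(1\pm\eta)x)\sim(1\pm\eta)^{-\alpha\beta}\P(\Xmax>x)$ and $\P(\DeltaSX>\eta x)=o(\P(\Xmax>\eta x))=o(\P(\Xmax>x))$) and letting $\eta\downarrow0$ gives the first line, and the absolute-value line is identical. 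So the whole theorem reduces to the displayed limit.

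The core is a quantitative ``single big jump'' estimate for the walk on a \emph{fixed} finite tree $t$ with $v$ non-root vertices and height $h$. Fix a small $\eta>0$, truncate the steps at level $x^{1-\eta}$, and split according to the number of steps exceeding $x^{1-\eta}$ in absolute value. If none does, every $S_v$ equals the corresponding truncated root-to-vertex path sum, and each of the four quantities in $\DeltaSX$ is $O(M)$ with $M:=\max_P|\sum_{\rho\ne u\in P}X_u\Ind[|X_u|\le x^{1-\eta}]|$ over the (at most $v$) paths $P$, each of length $\le h$; a union bound over paths together with a Fuk--Nagaev/Bernstein estimate for a sum of $\le h$ i.i.d.\ truncated copies of $X$ makes $\P(M>cx\mid t)$ super-polynomially small in $x$, provided $h$ is small enough that the truncated drift $h\,|\E[X\Ind[|X|\le x^{1-\eta}]]|$ stays $\ll x$. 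If exactly one step is large it plays the role of the big jump and the same argument controls $\DeltaSX$ by $O(M')$ with $M'$ the analogous maximum over the remaining (still i.i.d., truncated) steps. The only non-negligible contribution comes from two large steps on a common root-to-vertex path; for such a configuration to force $\DeltaSX>x$ both must in fact be of order $x$, so its probability is at most a constant times $\bigl(\#\{\text{ancestor--descendant pairs}\}\bigr)\P(|X|>cx)^2\le C\,vh\,\P(|X|>cx)^2$. Altogether, for a universal $C$ and all trees $t$ with $h\le$ (an explicit power of $x$ enforcing the drift condition),
\[
 \P(\DeltaSX>x\mid T=t)\;\le\;C\,v^{2}\,e^{-x^{\eta}/C}\;+\;C\,v\,h\,\P(|X|>cx)^{2}.
\]

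To pass to the random tree, integrate this bound. Put $n(x)=\lceil x^{c}\rceil$ for an exponent $c$ to be chosen and split on $\{V>n(x)\}$, $\{V\le n(x),\,H>V^{1/D+\ep}\}$, $\{V\le n(x),\,H\le V^{1/D+\ep}\}$. The first term is $\P(V>n(x))$, which by (T1) and a Tauberian theorem is $O(n(x)^{-\beta})$ up to slowly varying factors, hence $o(\P(\Xmax>x))$ once $c>\alpha$. The second is $\le\P(H>V^{1/D+\ep},V\ge n(x))=o(n(x)^{-\beta\gammabar})$ by (T2), negligible since $\gammabar>\alpha$. On the third event the deterministic bound applies with $h\le v^{1/D+\ep}$, $v\le n(x)$ (the drift condition holds once $\ep,\eta$ are small relative to $D-\Dcrit(X)$), and taking expectations gives at most $C\,n(x)^2e^{-x^\eta/C}+C\,\P(|X|>cx)^2\,\E[V^{1+1/D+\ep}\Ind[V\le n(x)]]$; the first piece is super-polynomially small, and since $\P(V>n)$ is regularly varying of index $-\beta$ the truncated moment is $O(n(x)^{1+1/D+\ep-\beta})$ up to slowly varying factors, so the second piece is $O(x^{c(1+1/D+\ep-\beta)-2\alpha})$ up to slowly varying factors, which is $o(\P(\Xmax>x))=o(x^{-\alpha\beta})$ (up to slowly varying factors) as soon as $c(1+1/D+\ep-\beta)<\alpha(2-\beta)$. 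The set of admissible $c$ — simultaneously $c>\alpha$, $c(1+1/D+\ep-\beta)<\alpha(2-\beta)$, and $c<$ (the power of $x$ from the drift condition, which is of order $\alpha D$ when $X$ is centred with $\alpha\le2$, of order $2D$ when $\alpha>2$, and of order $D$ in the non-centred case with $\E|X|<\infty$) — is non-empty precisely when $D>\Dcrit(X)$: the first two constraints are compatible exactly for $D>1$, and the drift constraint is compatible with $c>\alpha$ exactly for $D>\Dcrit(X)$. Choosing any such $c$ (and $\ep,\eta$ correspondingly) proves the displayed limit for $\Xmax$; the $\Xabsmax$ case is identical.

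The main obstacle is the deterministic estimate, and within it the correct bookkeeping of the three regimes of the truncated step $X\Ind[|X|\le x^{1-\eta}]$ — finite variance with negligible drift, heavy truncated variance of order $(x^{1-\eta})^{2-\alpha}$, or a genuine non-vanishing drift — since these are exactly the three branches of $\Dcrit(X)$ and each produces its own upper bound on the exponent $c$. A secondary technical point is checking that these constraints plus $c>\alpha$ leave a non-empty window precisely under $D>\Dcrit(X)$, which is where the hypothesis is used sharply; one also needs a little care with atypical top-of-the-tree topologies (for instance a very thin initial segment carrying a large negative step), whose contribution is a lower-order term absorbed using (T1). All slowly varying factors appearing along the way are harmless because the exponents being compared always differ by a fixed positive amount.
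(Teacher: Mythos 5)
Your overall architecture is the same as the paper's: establish a deterministic single--big--jump bound for a fixed tree (you via truncation plus Fuk--Nagaev/Bernstein, the paper via the Denisov--Dieker--Shneer exponential bound packaged as Proposition~\ref{prop:tree_bound} and Corollary~\ref{cor:tree_bound_reg_var}), then integrate over the tree law after restricting to a ``good'' event, and finally deduce the tail equivalences by the standard sandwich with regular variation. The reduction in your first paragraph matches the paper's verbatim, and your choice of random-walk estimate is an acceptable alternative.

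The gap is in the integration over the tree law. You split on $\{V>n(x)\}$, $\{V\le n(x),\,H>V^{1/D+\ep}\}$, $\{V\le n(x),\,H\le V^{1/D+\ep}\}$ and bound the middle term by $\P(H>V^{1/D+\ep},\,V\ge n(x))$ and invoke (T2). This inequality is false: $\{V\le n(x),\,H>V^{1/D+\ep}\}$ and $\{V\ge n(x),\,H>V^{1/D+\ep}\}$ are essentially disjoint, and the former has probability \emph{bounded below} in $x$ (already a fixed small tree with $H>V^{1/D+\ep}$ contributes a constant), so it cannot be $o(n(x)^{-\beta\gammabar})$. Assumption (T2) only controls the joint event for \emph{large} $V$; it says nothing about small trees with unfavorable $H/V^{1/D}$ ratio. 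Nor is this a harmless typo: a two-way split $\{V>n(x)\}$, $\{V\le n(x)\}$ using the crude bound $H\le V$ forces the incompatible constraints $c>\alpha$ and $c<\alpha$, so a further mechanism is genuinely required. The paper's fix is to introduce a second threshold $x^{\gamma}$ with $\gamma\in(\alpha/\gammabar,\,1\wedge\alpha)$: the good event is the \emph{union} $G_T=\{V\le x^{\gamma}\}\cup\{H\le V^{1/D+\ep},\,V\le x^{\alpha+\ep}\}$, so all trees with $V\le x^{\gamma}$ are treated by the deterministic bound with $H\le V$ (which works because $\gamma\eta<1$), while (T2) is only invoked for $V\ge x^{\gamma}\to\infty$. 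Your closing remark about ``atypical top-of-the-tree topologies absorbed using (T1)'' gestures vaguely at this, but (T1) alone does not control these trees; the extra threshold and the $H\le V$ estimate for small trees are necessary. Once you repair the split in this way (four events, or the paper's union form), your exponent bookkeeping and the derivation of the constraint window on $c$ go through.
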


The basic estimate needed in the proof of Theorems~\ref{th:1} and \ref{th:2} is the following proposition, which holds in full generality.

\begin{proposition}
\label{prop:tree_bound}
 Let $(b_n)_{n\in\N}$ be a natural scale sequence. For every $z\ge b_H$ and $y\ge 0$, we have
\[
 \P(\DeltaSX > y)
  \le \frac {HV} 2 \P(|X| > z)^2 + CV\exp(-y/z),
\]
where the constant $C$ depends only on the law of $X$ and on the sequence $(b_n)_{n\in\N}$. 
\end{proposition}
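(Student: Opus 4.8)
The plan is to show that $\DeltaSX$ can only be large if either two of the random walk steps are large simultaneously, or some single step is moderately large but its partial sums along the path to the root accumulate. The quantity $\DeltaSX$ compares $\Smax$ (resp.\ its leaf/absolute variants) with the corresponding maximal jump. First I would fix a vertex $v$ that attains $\Smax$ (the argument for the other three terms is identical up to obvious sign/absolute-value modifications), and denote by $w$ the child of the root on the path to $v$; write $S_v = X_w + (S_v - X_w)$, where $S_v - X_w$ is a sum of at most $H-1$ steps. The key dichotomy: either some other step along that path exceeds the threshold $z$ — contributing to the ``two big jumps'' term — or every step except possibly $X_w$ is at most $z$, in which case $S_v - X_w$ is a sum of bounded-in-$[-z,z]$-ish increments and we can hope $|S_v - X_w|$ is $O(z \cdot \text{stuff})$ with exponential tail.

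The heart of the estimate is the second term. Here I would use the natural scale sequence: along a path of length $\le H$, the partial sum of the steps (truncated or conditioned to avoid any jump larger than $z \ge b_H$) is, in distribution, comparable to $b_H \le z$ times a tight random variable. Combined with a union bound over the $V$ vertices, and a standard Chernoff-type argument exploiting that a tight family has sub-exponential deviations on the relevant scale (or more honestly, using regular variation / the definition of natural scale together with a truncation at level $z$ and Bernstein's inequality), one gets that $\P(|S_v - (\text{its largest step})| > y \mid \text{no step} > z) \le C\exp(-y/z)$ per vertex, hence $\le CV \exp(-y/z)$ after the union bound. For the first term: the probability that a fixed path of length $\le H$ contains two steps exceeding $z$ is at most $\binom{H}{2}\P(|X|>z)^2 \le \frac{H^2}{2}\P(|X|>z)^2$; but we should count pairs of vertices, not pairs of edges on a single path — bounding by $\frac{HV}{2}\P(|X|>z)^2$ comes from summing over the $\le V$ vertices $v$ and the $\le H$ ancestors, or more cleanly, over all pairs $(u,u')$ with $u$ an ancestor of $u'$, of which there are at most $HV$.

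I would carry out the steps in this order: (1) reduce to bounding $\P(\Smax - \Xmax > y)$ and the three analogues separately, fixing for each an optimal vertex; (2) split the event according to whether the path to that vertex contains $\ge 2$ steps exceeding $z$; (3) on the complement, condition all steps to lie (essentially) below $z$, and show the relevant centered/truncated partial sum has an $\exp(-y/z)$ tail uniformly, using that $z \ge b_H$ and the natural-scale tightness; (4) union-bound over vertices, collecting the factor $V$; (5) assemble the two contributions and absorb lower-order corrections into the constant $C$.

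The main obstacle will be step (3): converting ``tightness of $(S_n/b_n)_n$'' plus ``$z \ge b_H$'' into a genuine exponential tail bound $\P(|\text{truncated } S_{\le H}| > y) \le C e^{-y/z}$ that is uniform in $H$. Tightness alone gives no tail rate, so one must truncate the steps at level $z$ first — replacing $X_u$ by $X_u \Ind_{|X_u|\le z}$ — then argue that (a) on the event of no big jump the two processes agree, and (b) the truncated, recentered walk of length $\le H$ has increments bounded by $\le 2z$ and variance controlled so that Bernstein/Hoeffding gives the $\exp(-cy/z)$ decay; the recentering shift (the truncated mean, which may be nonzero and of order up to $\sim z \P(|X|>z) \cdot$ something in the non-integrable case) must be checked to be negligible on the scale $y/z$, or folded into the constant, which is where the hypotheses on regular variation and the natural scale sequence really get used. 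Handling the $|S_v|$ and leaf versions requires only that the same vertex-wise bound applies to $\max$ and $\max|\cdot|$ alike, which it does since the union bound is over all relevant vertices regardless.
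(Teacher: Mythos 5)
Your plan follows the same decomposition as the paper: split into a ``two big jumps on one branch'' event and a ``truncated partial sums stay below $y$'' event, union-bound each, and that is exactly what the paper does (with events $G_1,G_2$). One small structural point: the paper does not fix an argmax vertex and then work on its path. It defines $G_1$ (no two big jumps on any branch) and $G_2$ (every vertex's \emph{truncated} partial sum $S_v^{(z)}$ is $\le y$) as deterministic conditions on the whole configuration, and observes that $G_1\cap G_2$ implies $\DeltaSX\le y$. This avoids the selection-bias awkwardness of conditioning on where the maximum is attained and lets the union bound be completely routine.

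The genuine gap is in your step (3). You correctly identify that tightness of $(S_n/b_n)$ alone gives no tail rate and that truncation at $z$ is needed, but you then wave at Bernstein/Hoeffding ``with variance controlled'' and appeal to ``regular variation'' — which is \emph{not} an assumption of Proposition~\ref{prop:tree_bound}; the proposition holds for arbitrary $X$ with a natural scale sequence, and regular variation only enters later in Theorems~\ref{th:1}--\ref{th:2}. Hoeffding on increments in $[-z,z]$ gives $\exp(-cy^2/(nz^2))$, and turning that into the stated $\exp(-y/z)$ requires knowing that the truncated variance and truncated mean over $n\le H$ steps are controlled on the scale $z$; this is precisely the content that the paper imports as a black box, namely \cite[Lemma~2.1]{Denisov2008} (quoted as \eqref{eq:DDS_bound}), which bounds the Laplace transform of the truncated increment using only the natural-scale hypothesis, together with Feller's fact that $\sup_n n\P(|X|>b_n)<\infty$. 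Your Bernstein route could be made to work, but only by reproving exactly these truncated-moment estimates; the paper instead cites them. One further technical point worth noting: the paper's Lemma~\ref{lem:DDS_bound} bounds the tail of the \emph{truncated sum} $S_n^{(z)}=\sum X_k\Ind_{|X_k|\le z}$ (not the sum conditioned on all steps being small), obtained from the DDS bound by summing over which indices are truncated away; this is what makes the event $G_2$ work for vertices that \emph{do} have one big jump on their ancestor path, since $S_v - X_{u^*}$ then equals $S_v^{(z)}$ exactly.
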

We comment in Section~\ref{sec:discussion} below on the applicability of Proposition~\ref{prop:tree_bound} and on possible improvements. For now, we just mention that the result is not at all optimal, but in the case where $X$ has regularly varying tails, which is the focus of this article, it is more than enough for our purposes.

\def\alphaGW{\alpha_{T}}

Theorems~\ref{th:1} and \ref{th:2} apply to a large class of tree-indexed random walks. We consider now the particular case of (critical) \emph{branching random walks}, i.e.~random walks indexed by (critical) Galton--Watson trees. These have been well-studied in the literature (see below for a survey of existing results). Let $\mathbf p = (p_n)_{n\in\N}$ be a probability distribution on $\N = \{0,1,2,\ldots\}$. We recall that a Galton--Watson tree $T$ with offspring distribution $\mathbf p$ is a random rooted tree, such that the degree $d_\rho$ of the root follows the law $\mathbf p$ and the subtree of the neighbors of the root are independent copies of the tree $T$, independent of $d_\rho$. We say that $T$ is \emph{critical} if $\mathbf p$ has mean 1, i.e.~$\sum n p_n = 1$. In this case $T$ is finite almost surely. The following proposition now says that Theorems~\ref{th:1} and \ref{th:2} can be applied for random walks indexed by critical Galton--Watson trees:
\begin{proposition}
 \label{prop:gw}
Assume that $T$ is a critical Galton--Watson tree whose offspring distribution is in the domain of attraction of an $\alphaGW$-stable law, $\alphaGW\in(1,2]$, and let $D = \alphaGW/(\alphaGW-1)$. 
Then the following statements hold:
 \begin{enumerate}
  \item For $n\in\N$, let $T_n$ be a random tree following the law of $T$ conditioned on having $n$ vertices. Then Assumptions (Tn1) and (Tn2) hold with the same $D$.
  \item Assumptions (T1) and (T2) above hold with the same $D$, $\beta = 1/\alphaGW$ and every $\gammabar \in \R$. The asymptotic on $1-g_V(1-s)$ as $s\to0$ is given in \eqref{eq:gV_GW} and \eqref{eq:gV_GW_finite_variance}.
 \end{enumerate}
\end{proposition}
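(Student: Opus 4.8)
\emph{Proof plan.} The plan is to verify (Tn1), (Tn2), (T1) and (T2) for the conditioned, respectively unconditioned, Galton--Watson tree; the first three are soft, and (T2) is where the real work lies. Throughout, $n$ is understood to range over the infinitely many values for which $\P(V=n)>0$, and recall that $1/D = 1 - 1/\alphaGW$. Assumption (Tn1) is immediate: after conditioning $T$ on having $n$ vertices, $V_n$ is a deterministic constant of order $n$, hence tends to $\infty$. For (Tn2), since $V_n$ is this deterministic constant, the assumption reduces to $\P(H_n > n^{1/D+\ep})\to 0$ for each $\ep>0$, which I would deduce from tightness of $n^{-1/D}H_n$ via Markov's inequality, $\P(H_n > n^{1/D+\ep}) = \P(n^{-1/D}H_n > n^{\ep})\to 0$; the tightness follows from the convergence of $n^{-1/D}T_n$ to Aldous' CRT (when $\alphaGW = 2$), respectively Duquesne's $\alphaGW$-stable tree (when $\alphaGW\in(1,2)$), or more cheaply from the bound $\E[H_n] = O(n^{1/D})$.

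For (T1), I would exploit the Lagrange relation. Let $f$ be the offspring generating function and $\phi(s) = \E[s^{|\mathcal V|}]$ the generating function of the total progeny, so that $\phi(s) = s\,f(\phi(s))$ and, since $V = |\mathcal V|-1$, $g_V(s) = \phi(s)/s$. Writing $w = 1 - g_V(1-s)$ and $v = s(1-w)+w$, these identities force $w = 1 - f(1-v)$. Plugging in the classical expansion of a critical offspring generating function attracted to an $\alphaGW$-stable law, namely $1 - f(1-v) = v - v^{\alphaGW}\ell(1/v)(1+o(1))$ as $v\downarrow 0$ with $\ell\ge 0$ slowly varying (and $1 - f(1-v) = v - \tfrac12\sigma^2 v^2(1+o(1))$, $\sigma^2$ the offspring variance, when $\alphaGW = 2$ and this variance is finite), and using that $v - w = s(1-w)\sim s$ while $\alphaGW>1$, one obtains $v\sim w$ and $s\sim w^{\alphaGW}\ell(1/w)$. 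Hence $w = 1 - g_V(1-s)$ is the asymptotic inverse at $0$ of $t\mapsto t^{\alphaGW}\ell(1/t)$, so it is regularly varying there with index $\beta = 1/\alphaGW\in[\tfrac12,1)$; this gives (T1), the precise constant being \eqref{eq:gV_GW} in general and $1-g_V(1-s)\sim\sqrt{2s/\sigma^2}$, i.e.\ \eqref{eq:gV_GW_finite_variance}, in the finite-variance case. (Alternatively, the Otter--Dwass identity $\P(V=m) = \tfrac1{m+1}\P(\xi_1+\cdots+\xi_{m+1}=m)$ with $\xi_i\sim\mathbf p$, combined with a local limit theorem, shows $\P(V\ge m)$ to be regularly varying with index $-1/\alphaGW$, and Karamata's Tauberian theorem then yields (T1).)

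Finally, for (T2): as $\gammabar$ is arbitrary and $\beta = 1/\alphaGW$, I must show that $\P(H > V^{1/D+\ep},\,V\ge n)$ decays faster than every power of $n$. Decomposing on the value of $V$,
\[
 \P(H > V^{1/D+\ep},\,V\ge n) = \sum_{m\ge n}\P(V=m)\,\P(H > m^{1/D+\ep}\mid V=m),
\]
and the key input is a deviation bound for the conditioned height, \emph{uniform in} $m$: there exist $C,c,\theta>0$ such that
\[
 \P(H\ge\lambda\,m^{1/D}\mid V=m)\le C\exp(-c\,\lambda^{\theta})\qquad\text{for all admissible }m\text{ and all }\lambda\ge 1
\]
(in fact $\le C_K\lambda^{-K}$ for every $K$ would be enough). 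For finite-variance offspring this is the sub-Gaussian estimate of Addario-Berry, Devroye and Janson; in general it follows from the subexponential tail bounds of Kortchemski for conditioned stable Galton--Watson trees, which themselves can be read off from fluctuation estimates for the coding (\L{}ukasiewicz) walk of $T$. Taking $\lambda = m^{\ep}$ (so that $\lambda m^{1/D} = m^{1/D+\ep}$) and using monotonicity in $m$ then gives
\[
 \P(H > V^{1/D+\ep},\,V\ge n)\le C\exp(-c\,n^{\ep\theta})\sum_{m\ge n}\P(V=m)\le C\exp(-c\,n^{\ep\theta}) = o(n^{-\beta\gammabar})
\]
for every $\gammabar\in\R$, which is (T2). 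I expect this uniform height deviation bound to be the main obstacle: the soft tightness of $n^{-1/D}H_n$ used for (Tn2) is not enough here, since (T2) requires the estimate simultaneously for all $m\ge n$ and with explicit (at least super-polynomial) decay in $\lambda$ — precisely the regime in which one must invoke the fine structure of the conditioned $\alphaGW$-stable tree, equivalently sharp control of the coding walk of $T$ conditioned to be a long excursion.
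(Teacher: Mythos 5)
Your proposal is correct, and for parts (Tn1), (Tn2), and (T1) it follows essentially the same route as the paper: (Tn1) is trivial; for (Tn2) the paper cites Duquesne's convergence in law of $(a_n/n)H_n$ given $V=n$ (equivalent in spirit to your tightness appeal); and for (T1) both you and the paper work from the Lagrange identity $\phi(s)=s\,f(\phi(s))$ — the paper merely translates it into log-Laplace transforms $\kappa_V=-\kappa_{Z-1}^{-1}$, which streamlines the asymptotic inversion you carry out directly. The genuine divergence is in (T2). You decompose on the value of $V$ and invoke a uniform-in-$m$ super-polynomial (indeed sub-exponential or sub-Gaussian) tail bound for $\P(H\ge\lambda m^{1/D}\mid V=m)$, citing Kortchemski for the general stable case and Addario-Berry--Devroye--Janson for finite variance. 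The paper explicitly notes that this would work but opts for a different, self-contained argument: it decomposes on $\{H\ge k\}$ rather than on $\{V=m\}$, and uses Geiger's spine construction of the tree conditioned on $\{H\ge k\}$ to show that $V$ stochastically dominates a sum of iid copies of the unconditioned progeny, yielding $\P(V\le n\mid H\ge k)\le\exp(-kn^{-1/D+o(1)})/\P(H\ge k)$ directly from the generating function of $B-A$; a dyadic-type decomposition over $H$ then gives the required decay. What your approach buys is brevity and the ability to quote sharp existing bounds; what the paper's approach buys is elementarity and independence from the fine fluctuation theory of the coding walk, which, as you yourself anticipate, is "the main obstacle" in your route. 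Both are valid proofs; yours outsources the hard step, the paper's reproves a weaker version of it from scratch.
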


We remark that the ``dimension'' $D = \alphaGW/(\alphaGW-1)$ in Proposition~\ref{prop:gw} above indeed coincides with the Hausdorff (or packing) dimension of the $\alpha_T$-stable tree \cite{Duquesne2004}, which is the scaling limit of the trees $T_n$ from Proposition~\ref{prop:gw} \cite{Duquesne2003}.

Apart from critical Galton--Watson trees, which cover the range $D\in[2,\infty)$, we remark that the results in this paper also allow to treat supercritical Galton--Watson trees, which correspond to $D=\infty$.

\paragraph{Overview of the remainder of the article.} Section~\ref{sec:discussion} presents the heuristics that underly the proofs of Theorems~\ref{th:1} and \ref{th:2}. It also discusses possible generalizations. Section~\ref{sec:tree-bound} is devoted to the proof of Proposition~\ref{prop:tree_bound}. Theorems~\ref{th:1} and \ref{th:2} are proven in Section~\ref{sec:tree-indexed-reg-var}. The case of Galton--Watson trees is considered in Section~\ref{sec:gw} where Proposition~\ref{prop:gw} is proven. Finally, an appendix, Section~\ref{sec:reg-var}, recalls standard properties of regularly varying functions.

\section{Heuristics and discussion}
\label{sec:discussion}

In this article, we consider tree-indexed random walks with heavy tails in a regime where the total maximal displacement is attained by atypically large fluctuations on certain branches.
At the heart of the results are therefore large deviation results for heavy-tailed random walks. Recall that a random variable $X$ is said to follow a \emph{subexponential distribution} if
\[
 \lim_{x\to\infty} \frac{\P(S_n > x)}{n\P(X>x)} = 1\quad\text{for all $n\in\N$,}
\]
where $S_n = X_1+\cdots +X_n$ with $X_1,X_2,\ldots$ iid copies of $X$. A huge body of literature is devoted to the problem of finding (optimal or nearly optimal) sequences $x_n\to\infty$ such that $\P(S_n > x) \approx n\P(X>x)$ for all $x\ge x_n$.
The literature on this topic is quite overwhelming\footnote{Classical references are \cite{Linnik1961,Linnik1961a,Heyde1967,Nagaev1969,Nagaev1969a}, for more see \cite[Section 8.6]{EmbKluMik}, \cite{Mikosch1998} and \cite{Denisov2008}. Treatments of general distributions with regularly varying tails appear in \cite{Durrett1979,Cline1998}. Denisov, Dieker and Shneer \cite{Denisov2008} give a uniform and fairly insightful treatment of general subexponential distributions. }, due in part to the large variety of subexponential distributions leading to a substantial number of treatments differing in results and/or techniques. 

For our Proposition~\ref{prop:tree_bound}, we use an ``exponential bound'' from \cite{Denisov2008} on suitably truncated random walks, which we recall in Equation~\eqref{eq:DDS_bound} below. This is the only random walk bound we use in this paper. We combine it with the simple observation that the maximal displacement of a tree-indexed random walk is approximately equal to the size of the maximal jump if the following two events happen:
\begin{enumerate}
 \item No two large jumps occur on the same branch of the tree with high probability.
 \item The contribution of the small jumps are asymptotically negligible.
\end{enumerate}
Quantifying this leads to Proposition~\ref{prop:tree_bound}. Note that the precise bound given in the statement of  Proposition~\ref{prop:tree_bound} will turn out to be not so important, but rather the assumption that $y\ge b_H$ which will need to be verified for the values of $y$ we will be interested in. This assumption exactly corresponds to requiring that the typical value of the maximal displacement or of the biggest jump is is typically much larger than the values of a random walk along a (fixed) branch of the tree.

For Theorems~\ref{th:1} and \ref{th:2} we make explicit such a regime in the case of regularly varying displacement. Under Assumption (XR), this means that we require that $V^{1/\alpha+o(1)} \gg b_H$, where $b_n$ is a natural scale sequence for the random walk. This amounts to the assumption $D > \Dcrit(X)$ in Theorems~\ref{th:1} and \ref{th:2}. 


We do not go into further details of the proofs of Theorems~\ref{th:1} and \ref{th:2} here. However, we elaborate briefly on the possible improvements of Proposition~\ref{prop:tree_bound}. First, the exponential bound \eqref{eq:DDS_bound} might be replaced by a better bound. Indeed, although it is fairly efficient for values of $x$ such that $\P(S_n > x)$ is not too small, it is quite bad for those $x$ for which $\P(S_n > x)$ is very small\footnote{For example, in case of regularly varying tails it is known that the sum over the jumps of size at most $(1-\ep)x$, $\ep>0$, is negligible for large enough $x$ compared to the non-truncated sum \cite{Durrett1979,Cline1998}, a fact which is not apparent from \eqref{eq:DDS_bound}}. Although this is not too much of a problem in the case of regularly varying tails, it is in fact disastrous in the case of stretched exponential tails $\P(X>x) \approx \exp(-x^r)$ when  $r$ is small (see \cite[Theorem~3]{Gantert2000} for bounds on truncated random walks with streched exponential tails). The reason why we chose \eqref{eq:DDS_bound} is its simplicity (both statement and proof) and its generality: we are not aware of any other work which allows to treat such a large class of distributions. 

As a second improvement of Proposition~\ref{prop:tree_bound}, one might relax the first of the two points mentioned above. Namely, instead of just throwing away the event where two large jumps occur on the same branch of the tree, one might instead separate the jumps into large jumps and small jumps, ignore the small jumps and then consider the skeleton of the tree consisting of the large jumps. Then, by a certain induction, one might use bounds on this smaller tree to get bounds on the original, larger tree. Working out such an argument would give better quantitative bounds on the difference between the maximal displacement and the maximal jump size. For example it would give good bounds on the range of values $x$ (in terms of $n$) such that $\P(\Smax_n> x)/\P(\Xmax_n>x) \in[1-\ep,1+\ep]$ in Theorem~\ref{th:1}, for $\ep>0$. However, in order to keep the current proof as simple as possible, we did not pursue this argument and leave it open for future work.

%
%

\section{Proof of Proposition~\ref{prop:tree_bound}}
\label{sec:tree-bound}

We start with a large deviation estimate for random walks. Let $X_1,X_2,\ldots$ be iid copies of the random variable $X$ and define $S_n = X_1+\cdots+X_n$ and for $y\ge0$, 
\[
 S_n^{(y)} = \sum_{k=1}^n X_k\Ind_{(|X_k|\le y)}.
\]
We then have the following lemma:

\begin{lemma}
\label{lem:DDS_bound}
Let $(b_n)_{n\in\N}$ be a natural scale sequence. Then there exists $C\in(0,\infty)$ (depending only on the law of $X$ and on the sequence $(b_n)_{n\in\N}$), such that
\[
\forall n\in\N\,\forall x\ge0\,\forall y\ge b_n: \P(|S^{(y)}_n|>x) \le C \exp(-x/y).
\]
\end{lemma}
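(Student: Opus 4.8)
The plan is to prove Lemma~\ref{lem:DDS_bound} by invoking the "exponential bound" from Denisov, Dieker and Shneer \cite{Denisov2008} — referred to in the text as \eqref{eq:DDS_bound} — and then extracting from it the clean inequality $\P(|S_n^{(y)}| > x) \le C\exp(-x/y)$ that we actually need. The truncated sum $S_n^{(y)}$ is a sum of $n$ iid bounded random variables $X_k \Ind_{(|X_k| \le y)}$, each lying in $[-y, y]$, so the natural tool is a Bernstein/Bennett-type exponential inequality in which the scale parameter in the exponent is the truncation level $y$. The role of the natural scale sequence $(b_n)$ and the hypothesis $y \ge b_n$ is to control the \emph{centering}: the mean $n\,\E[X_1 \Ind_{(|X_1| \le y)}]$ and the truncated variance $n\,\E[X_1^2 \Ind_{(|X_1| \le y)}]$ must both be $O(y)$ uniformly, which is exactly what tightness of $(S_n/b_n)$ buys us (for instance, truncated-variance estimates of the form $n\,\E[X_1^2\Ind_{(|X_1|\le b_n)}] = O(b_n^2/n)\cdot n = O(b_n^2)$, combined with $y \ge b_n$).

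\textbf{Key steps, in order.} First I would recall \eqref{eq:DDS_bound} precisely: it gives, for truncated centered random walks, a bound of the shape $\P(S_n^{(y)} - n\E[X_1\Ind_{(|X_1|\le y)}] > x) \le \exp(-c x/y \cdot \log(1 + xy/(n\sigma_y^2)))$ or a comparable Fuk--Nagaev-type expression, where $\sigma_y^2 = \E[X_1^2\Ind_{(|X_1|\le y)}]$; the key point is that the prefactor in the exponent is $x/y$. Second, I would use the natural-scale property to show that for $y \ge b_n$ one has $|n\E[X_1\Ind_{(|X_1|\le y)}]| \le C_1 y$ and $n\sigma_y^2 \le C_2 y^2$ for constants depending only on the law of $X$ and on $(b_n)$; this is where tightness of $(S_n/b_n)_n$ is converted into the two moment estimates via truncated first- and second-moment bounds. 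Third, I would combine these: absorbing the centering costs at most a factor $e^{C_1}$ (replace $x$ by $x - C_1 y$ and note the bound is trivial when $x \le C_1 y$ by taking $C$ large), and the variance bound makes the logarithmic factor bounded below by a positive constant, so $\exp(-cx/y \cdot \log(\cdots))$ is dominated by $\exp(-c' x/y)$. Fourth, I would handle both tails (the bound for $|S_n^{(y)}|$ follows from applying the one-sided bound to $\pm S_n^{(y)}$ and adding, adjusting $C$). Finally I would note uniformity in $n$: all constants depend only on the law of $X$ and on $(b_n)$, as claimed, and the case $x \le b_n$ (or any $x$ where the bound would exceed $1$) is covered trivially by enlarging $C$.

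\textbf{The main obstacle} I expect is the bookkeeping around the centering term: \eqref{eq:DDS_bound} as stated in \cite{Denisov2008} is phrased for centered (or recentered) truncated walks, and transferring it to the raw $S_n^{(y)}$ requires carefully bounding $n\,\E[X_1\Ind_{(|X_1|\le y)}]$ uniformly over $y \ge b_n$ and $n \in \N$. When $\E[X] = 0$ this mean is $-n\,\E[X_1\Ind_{(|X_1|>y)}]$ and one can bound it using the tail; when $\E[X]$ does not exist or is nonzero one must still argue, purely from tightness of $(S_n/b_n)$, that the recentering needed to make $(S_n^{(y)}/b_n)$ tight is itself $O(b_n) = O(y)$ — this is a standard but slightly delicate consequence of the definition of a natural scale sequence (cf.\ the analogous arguments in \cite{Denisov2008}), and it is the one place where the proof is not purely mechanical. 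Once that linear-in-$y$ control of the centering is in hand, the rest is a routine simplification of the Fuk--Nagaev exponent.
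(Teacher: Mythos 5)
Your plan rests on a misstatement of the cited inequality \eqref{eq:DDS_bound}. In the paper (following Denisov--Dieker--Shneer, Lemma~2.1), \eqref{eq:DDS_bound} reads
\[
\P\bigl(|S_n|>x,\ |X_1|\le y,\ldots,|X_n|\le y\bigr)\ \le\ C\,e^{-x/y}
\quad\text{for all }n,\ x\ge 0,\ y\ge b_n,
\]
which is already in the clean $e^{-x/y}$ form, with no centering term and no Fuk--Nagaev logarithm. It bounds the probability of the \emph{intersection} event $\{|S_n|>x\}\cap\{\max_i|X_i|\le y\}$, not $\P(|S_n^{(y)}|>x)$. The two are genuinely different: $S_n^{(y)}$ can be large even when several $|X_i|$ exceed $y$, because those terms are simply removed from the sum. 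Passing from the intersection bound to the truncated-sum bound is the actual content of the lemma, and your proposal does not contain it. The paper does it by decomposing $\P(|S_n^{(y)}|>x)$ over the random set of indices $i$ with $|X_i|>y$: if exactly $k$ such indices occur, the truncated sum is distributed as $S_{n-k}$ restricted to $\{\max_{i\le n-k}|X_i|\le y\}$, so
\[
\P(|S_n^{(y)}|>x)=\sum_{k=0}^n\binom{n}{k}\,\P\bigl(|S_{n-k}|>x,\ \textstyle\max_{i\le n-k}|X_i|\le y\bigr)\,\P(|X|>y)^k
\le C e^{-x/y}\bigl(1+\P(|X|>b_n)\bigr)^n,
\]
using monotonicity of $(b_n)$ so that $y\ge b_{n-k}$ for every $k$, and then the fact (from Feller, IX.7) that $\sup_n n\,\P(|X|>b_n)<\infty$. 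This combinatorial step, together with the uniform tail bound along the natural scale, is the whole proof; none of it appears in your write-up.

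Your alternative route---applying a Bernstein/Bennett inequality directly to $S_n^{(y)}$ after establishing $|n\,\E[X\Ind_{(|X|\le y)}]|\le C_1 y$ and $n\,\E[X^2\Ind_{(|X|\le y)}]\le C_2 y^2$ for $y\ge b_n$---is in principle a legitimate way to prove (a constant-degraded version of) the lemma, but be aware that these moment estimates from tightness alone are precisely the technical heart of the Denisov--Dieker--Shneer proof of \eqref{eq:DDS_bound}; going that way means re-deriving the cited lemma rather than using it. You flag this step as ``slightly delicate'' and do not carry it out, so as written the argument is incomplete on that branch as well. Also, a Bernstein bound naturally yields $e^{-cx/y}$ for some $c<1$, not $e^{-x/y}$ exactly; that is harmless for the application (Corollary~\ref{cor:tree_bound_reg_var} only needs polynomial decay) but should be stated if you take this route.
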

\begin{proof}
The lemma is a simple extension of the following bound \cite[Lemma~2.1]{Denisov2008}:
\begin{equation}
 \label{eq:DDS_bound}
 \exists C\in(0,\infty)\,\forall n\in\N\,\forall x\ge0\,\forall y\ge b_n: \P(|S_n|>x,|X_1|\le y,\ldots,|X_n|\le y) \le C \exp(-x/y).
\end{equation}
In order to use \eqref{eq:DDS_bound}, we decompose:
 \begin{align*}
  \P(|S^{(y)}_n|>x) &= \sum_{I\subset \{1,\ldots,n\}} \P(|S^{(y)}_n|>x,\,\forall i\in I:|X_i|\le y,\,\forall j\not\in I:|X_j| > y)\\
  &= \sum_{k=0}^n \binom n k \P(|S_{n-k}| > x,\,|X_1|\le y,\ldots,|X_{n-k}|\le y) \P(|X|>y)^k.
 \end{align*}
Since $b_n$ is increasing by assumption, we have $y\ge b_{n-k}$ for every $k\in\{0,\ldots,n\}$. With \eqref{eq:DDS_bound}, this gives for all $n\in\N,\, x\ge0,\, y\ge b_n$:
\begin{align*}
 \P(|S^{(y)}_n|>x) &\le C \exp(-x/y) \sum_{k=0}^n \binom n k \P(|X|>y)^k\\
 &\le C \exp(-x/y) (1+\P(|X| > b_n))^n \\
 &\le C\exp(-x/y) \exp(n\P(|X|>b_n)).
\end{align*}
From the proof of the lemma in \cite[Section~IX.7]{Feller1971}, we have $\sup_n n\P(|X|>b_n) < \infty$. This yields the lemma.
\end{proof}

\begin{proof}[Proof of Proposition~\ref{prop:tree_bound}]
Fix $z\ge b_H$ and $y\ge 0$. Define the events
 \begin{align*}
  G_1 &= \{\forall u,v\in\mathcal V^*: \tif |X_u| > z\text{ and }|X_v|> z\text{, then } u \not< v\},\\
  G_2 &= \{\forall v\in\mathcal V: |S_v^{(z)}| \le y\}.
 \end{align*}
Here, similarly to the definition at the beginning of the section, we define $S_v^{(z)} = \sum_{\rho \ne u \le v} X_u\Ind_{(|X_u|\le z)}.$
It is clear that
\[
 \text{ on }G_1\cap G_2,\quad \DeltaSX \le y.
\]
It therefore suffices to bound $\P(G_1^c)$ and $\P(G_2^c)$. By a union bound, we have
\[
 \P(G_1^c) \le \sum_{u,v\in\mathcal V^*,\,u < v} \P(|X| > z)^2 \le \frac{HV}{2} \P(|X| > z)^2.
\]
Furthermore, again by a union bound,
\[
 \P(G_2^c) \le \sum_{v\in\mathcal V^*} \P(|S_v^{(z)}| > y) \le CV \exp(-y/z),
\]
where the last inequality follows from Lemma~\ref{lem:DDS_bound} and the fact that $z \ge b_n$ for all $n\le H$ since $b_n$ is non-decreasing in $n$. The two previous inequalities then yield
\begin{equation*}
 \P((G_1\cap G_2)^c) \le \P(G_1^c) + \P(G_2^c) \le \frac {HV} 2 \P(|X| > z)^2 + CV\exp(-y/z).
\end{equation*}
This finishes the proof.
\end{proof}

\section{Regularly varying displacement: proofs of Theorems~\ref{th:1} and~\ref{th:2}}
\label{sec:tree-indexed-reg-var}

Throughout this section, we will make use of some classic results on regularly varying functions (in particular, Potter's bounds), readers not familiar with this theory may refer to Section~\ref{sec:reg-var} where these results are recalled.

Assume from now on that Assumption (XR) holds. Standard results on triangular arrays \cite[Section~XI.7]{Feller1971} or the domain of attraction of stable laws \cite[Section~XVII.5]{Feller1971}, together with Potter's bounds easily give that for every $\ep>0$ the following sequence is a natural scale sequence for the random walk $(S_n)_{n\in\N}$:
\begin{equation}
\label{eq:bn}
 b_n^\ep = 
 \begin{cases}
  n^{1/(2\wedge \alpha) + \ep},& \text{if $\E[|X|]<\infty$ and $\E[X]=0$}\\
  n^{1/(1\wedge \alpha) + \ep},& \text{otherwise.}
 \end{cases}
\end{equation}

We now first simplify Lemma~\ref{lem:DDS_bound} to the current setting:
\begin{corollary}
\label{cor:tree_bound_reg_var}
 For every $\ep>0$, there exists $C=C(\ep)\in(0,\infty)$, such that for every tree $T$, for every $y \ge b_H^\ep$,
\[
  \P(\DeltaSX > y)
  \le C HV y^{-(2-\ep)\alpha}.
\]
\end{corollary}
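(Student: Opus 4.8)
The plan is to derive Corollary~\ref{cor:tree_bound_reg_var} directly from Proposition~\ref{prop:tree_bound} by making a good choice of the free parameter $z$ in that bound, exploiting the regular variation of the tails from Assumption (XR). Concretely, I would apply Proposition~\ref{prop:tree_bound} with the natural scale sequence $b_n = b_n^\ep$ from \eqref{eq:bn}, which is legitimate since the hypothesis $y\ge b_H^\ep$ guarantees $y\ge b_H$ (using the defining properties of the truncated random walk, and the fact that $b_n^\ep$ is non-decreasing as required). The two terms to balance are then $\tfrac{HV}{2}\P(|X|>z)^2$ and $CV\exp(-y/z)$, and the parameter $z$ must satisfy $z\ge b_H$, i.e.\ $z\ge b_H^\ep$.

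The natural choice is to take $z$ of order $y$ divided by a slowly growing factor, so that the exponential term becomes superpolynomially small in $y$ while the first term is still governed by $\P(|X|>z)^2$. Specifically I would set $z = y/(\lambda\log y)$ (or $z = \delta y$ for a suitable small constant $\delta$ — either works, the logarithmic choice being cleaner for making the exponential term negligible). With $z = y/((2\alpha/\ep')\log y)$ for a suitable $\ep' $ related to $\ep$, one gets $CV\exp(-y/z) = CV y^{-2\alpha/\ep'}$, which is $o(Vy^{-(2-\ep)\alpha})$ and in particular bounded by $C'HVy^{-(2-\ep)\alpha}$. For the first term, since $\P(|X|>x)$ is regularly varying with index $-\alpha$, Potter's bounds (from Section~\ref{sec:reg-var}) give $\P(|X|>z)\le C z^{-\alpha+\ep''}$ for $z$ large, and since $z$ and $y$ differ only by a logarithmic factor, this translates into $\P(|X|>z)^2 \le C y^{-(2-\ep)\alpha}$ after absorbing the logarithmic factors and the $\ep''$ into the $\ep$ in the exponent (this is where one tunes $\ep', \ep''$ so that $(2\alpha - 2\ep'')$ adjusted for the logarithmic slack is at least $(2-\ep)\alpha$). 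One must also handle the regime of small $y$: for $y$ bounded, $z$ might not be large, but then the probabilities are trivially bounded by $1$ and the inequality holds by inflating the constant $C$, since $HVy^{-(2-\ep)\alpha}\ge 1$ is false in general — so more carefully, one restricts to $y$ large enough (allowed since for $y$ in a bounded range one simply notes $\P(\DeltaSX>y)\le 1 \le C HV y^{-(2-\ep)\alpha}$ once $C$ is large, using $H,V\ge 1$; if $y$ can be arbitrarily small this fails, so one should observe that $\DeltaSX$ is controlled and the claimed bound is only meaningful/needed for $y$ past a threshold, or absorb via $\min(1,\cdot)$).

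The only genuinely delicate point is the bookkeeping of the $\ep$'s: one is given a target exponent $(2-\ep)\alpha$ and must choose the natural scale exponent (the $\ep$ in $b_n^\ep$), the Potter-bound slack, and the logarithmic-factor slack all small enough that their combined effect still leaves an exponent $\ge (2-\ep)\alpha$, while keeping $z\ge b_H^\ep$ so Proposition~\ref{prop:tree_bound} applies. I expect this to be the main obstacle — not conceptually hard, but requiring care to route the quantifiers correctly (fix the target $\ep$ first, then choose everything else in terms of it). I would also double-check that with $z = y/(\text{const}\cdot\log y)$ the constraint $z\ge b_H^\ep$ is implied by $y\ge b_H^{\ep}$ up to possibly enlarging the exponent in $b_H$ slightly — again a matter of renaming $\ep$. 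Once the parameters are pinned down, the estimate $\P(\DeltaSX>y)\le \tfrac{HV}{2}\P(|X|>z)^2 + CV\exp(-y/z)\le CHVy^{-(2-\ep)\alpha}$ follows by combining the two displayed bounds above.
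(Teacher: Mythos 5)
Your plan is essentially the same as the paper's: apply Proposition~\ref{prop:tree_bound} with a natural scale sequence of the form $b_n^\ep$, choose $z$ slightly below $y$, use Potter's bounds on $\P(|X|>z)^2$, and absorb all the slack into the exponent $(2-\ep)\alpha$. The difference is in the parametrization of $z$. The paper takes $z=y^{1-\delta}$ for $\delta\in(0,\ep/2)$ small enough that $(b_n^\ep)^{1-\delta}$ is still a natural scale sequence; this makes the exponential term $\exp(-y^\delta)$ genuinely superpolynomial, and the constraint $z\ge b_H$ is then $y^{1-\delta}\ge (b_H^\ep)^{1-\delta}$, which is immediate from $y\ge b_H^\ep$ with no ``$y$ large'' caveat. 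Your $z=y/(\lambda\log y)$ also works in the end, but it yields only $\exp(-y/z)=y^{-\lambda}$ (polynomial, not superpolynomial as you first wrote --- you do correct this in the next sentence), and the constraint $z\ge b_H^{\ep''}$ then requires a ``for $y$ large enough'' threshold, which in turn forces you to dispose of small $y$ separately. None of this is fatal, but it is why the paper's choice is cleaner.

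Two genuine inaccuracies to flag. First, the parenthetical claim that $z=\delta y$ ``works'' is wrong: with $z=\delta y$ the exponential term is $\exp(-y/z)=e^{-1/\delta}$, a constant independent of $y$, so $CVe^{-1/\delta}$ cannot be dominated by $CHV y^{-(2-\ep)\alpha}$ as $y\to\infty$ with $H,V$ fixed. A linear-in-$y$ truncation level discards the exponential decay entirely; you need $y/z\to\infty$. Second, your concern that the small-$y$ regime might be problematic because ``$y$ can be arbitrarily small'' does not in fact arise under the hypothesis: for $H\ge 1$ one has $y\ge b_H^\ep\ge b_1^\ep=1$, so $y$ is bounded away from $0$ and the bound on $[1,y_0]$ is absorbed into $C$; the degenerate case $H=0$ is handled trivially, as the paper notes. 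With those two points fixed and the quantifier order pinned down ($\ep$ first, then $\delta$ or $\lambda,\ep',\ep''$ in terms of it), your argument is a valid, if slightly less tidy, variant of the paper's.
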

\begin{proof}
 Fix $\ep\in(0,1)$. Let $\delta\in(0,\ep/2)$. By the assumption on $\P(|X| > x)$ and Potter's bounds, there exists $C'=C'(\ep,\delta)$, such that for all $y> 0$,
 \[
  \P(|X| > y^{1-\delta})^2+\exp(-y^{\delta})\le C' y^{-(2-\ep)\alpha}.
 \]
 Now choose $\delta = \delta(\ep,\alpha) \in (0,\ep/2)$ such that $(b_n^\ep)^{1-\delta}$ is a natural scale sequence. By Proposition~\ref{prop:tree_bound} applied with $z = y^{1-\delta}$, we then have for every tree $T$ and for every $y\ge b_H^\ep$, 
\[
 \P(\DeltaSX > y) \le \frac {HV} 2 \P(|X| > y^{1-\delta})^2 + CV\exp(-y^\delta) \le C'(C+H)V y^{-(2-\ep)\alpha},
\]
where $C$ is the constant from Proposition~\ref{prop:tree_bound}. This proves the corollary for all trees of height $H\ge1$. For $H=0$ the bound trivially holds. This finishes the proof.
\end{proof}

We are now ready for the proofs of Theorems~\ref{th:1} and \ref{th:2}.


\begin{proof}[Proof of Theorem~\ref{th:1}]
 We trivially have $\Xmax_n \le \Xabsmax_n$. In order to prove the first statement of the theorem, it is therefore enough to show that 
\begin{equation}
\label{eq:conv_probab}
\lim_{n\to\infty} \frac{\DeltaSX_n}{\Xmax_n} = 0,\quad\text{in probability.}
\end{equation}

We first give a lower bound on $\Xmax_n$. By independence, we have for all $x$,
\begin{equation}
\label{eq:Xmax_cdf}
 \P(\Xmax_n \le x\,|\,T_n) = \P(X\le x)^{V_n}.
\end{equation}
Potter's bounds then give for every $\ep>0$ and for large $x$,
\begin{equation*}
\P(\Xmax_n \le x\,|\,T_n) = (1-x^{-(\alpha+\ep)})^{V_n} \le \exp(-V_n x^{-(\alpha+\ep)}).
\end{equation*}
Applying this with $x = V_n^{1/(\alpha+2\ep)}$ gives for every $\ep>0$,
\begin{equation}
\label{eq:poste}
 \P(\Xmax_n \le V_n^{1/(\alpha+\ep)}) \to 0,\quad\tas n\to\infty.
\end{equation}

Now let $b_n^\ep$ be as in \eqref{eq:bn}, so that $b_n^\ep = n^{\eta + \ep}$, where
\begin{equation}
 \label{eq:eta}
 \eta = \begin{cases}
                 1/(2\wedge\alpha) & \text{if $\E[|X|]<\infty$ and $\E[X]=0$}\\
                 1/(1\wedge \alpha) & \text{otherwise.}
                \end{cases}.
\end{equation}
Note that by the assumption on $\alpha$ and the assumption $D>1$, we have \(\eta/D < 1/\alpha\). In particular, for every $\ep>0$ small enough, we have for every $h,v\in\N$
\begin{equation}
\label{eq:colis}
 h \le v^{1/D+\ep} \Rightarrow b^\ep_{h} \le v^{1/(\alpha+\ep)}\quad\text{for $v$ large enough.}
\end{equation}

By Corollary~\ref{cor:tree_bound_reg_var} (applied to the tree-indexed random walk conditioned on the tree $T_n$), we now have for every $\ep > 0$, for some $C=C(\ep)<\infty$,
\begin{align}
\nonumber
 \P(\DeltaSX_n > V_n^{1/(\alpha+\ep)}\,|\,T_n)\Ind_{(H_n \le V_n^{1/D+\ep},\,b^\ep_{H_n} \le V_n^{1/(\alpha+\ep)})} &\le CH_nV_n V_n^{-(2-\ep)\alpha/(\alpha+\ep)}\Ind_{(H_n \le V_n^{1/D+\ep})}\\
 \label{eq:poste2}
 &\le C V_n^{1/D + 1 - 2 +C' \ep},
\end{align}
for some constant $C' = C'(\alpha)$. Since $D > 1$ by (Tn2) and $V_n\to+\infty$ in probability by (Tn1), the previous equations \eqref{eq:poste2} and \eqref{eq:colis} show that for every $\ep>0$ small enough,
\begin{equation}
\label{eq:final}
 \P(\DeltaSX_n > V_n^{1/(\alpha+\ep)},\,H_n \le V_n^{1/D+\ep}) \to 0,\quad\tas n\to\infty.
\end{equation}
Summing up the above equations, we have for some $\ep>0$ small enough,
\begin{multline*}
 \P(\DeltaSX_n > (\Xmax_n)^{(\alpha+\ep/2)/(\alpha+\ep)}) \le \\
 \P(\DeltaSX_n > V_n^{1/(\alpha+\ep)},\,H_n \le V_n^{1/D+\ep}) + \P(\Xmax_n \le V_n^{1/(\alpha+\ep/2)}) + \P(H_n > V_n^{1/D+\ep}),
\end{multline*}
and all three terms go to zero as $n\to\infty$ by \eqref{eq:final}, \eqref{eq:poste} and assumption (Tn2), respectively. Since $\Xmax_n\to\infty$ in probability by assumption (Tn1), this proves \eqref{eq:conv_probab} and therefore finishes the proof of the first statement of the theorem. The second statement follows easily from \eqref{eq:Xmax_cdf} and standard arguments.
\end{proof}

\begin{proof}[Proof of Theorem~\ref{th:2}]
 We only prove the statements involving $\Xmax$. The first statement involving $\Xabsmax$ then immediately follows since $\Xmax\le \Xabsmax$ by definition, and the second statement involving $\Xabsmax$ is proven similarly.
 
 As in \eqref{eq:Xmax_cdf} we have by independence,
 \begin{equation}
  \label{eq:MX_tail}
  \P(\Xmax > x) 
  = 1-\E[\P(X \le x)^V] = 1-g_V(1-\P(X>x)).
 \end{equation}
With Assumptions (XR) and (T1), this implies that $\P(\Xmax > x)$ is regularly varying at $\infty$ (with index $-\beta\alpha$), such that in particular, for every $\delta(x) \to 0$ as $x\to\infty$, by the uniform convergence theorem for regularly varying functions \cite[Theorem 1.2.1]{Bingham1987},
\begin{equation}
 \label{eq:MX_tail_reg}
 \P(\Xmax > (1-\delta(x))x) \sim \P(\Xmax > x) \sim \P(\Xmax > (1+\delta(x))x),\quad\tas x\to\infty.
\end{equation}

Fix\footnote{Note that $\gamma$ exists, since by Assumption (T2), $(1\wedge \alpha)\gammabar > (1\wedge\alpha)(1\vee\alpha) = \alpha$.} $\gamma \in (\alpha/\gammabar,1\wedge \alpha)$. For $\ep > 0$ and $x>0$, set 
\[
G_T := \{V\le x^{\gamma}\}\cup\{H \le V^{\frac 1 D + \ep},\,V \le x^{\alpha+\ep}\}
\]
(note that $G_T$ depends on $\ep$ and $x$ but that we suppress this from the notation for readability).
Fix a positive function $\delta(x)$ converging to $0$ slower than polynomially, e.g. $\delta(x) = 1/\log(2+x)$. 
We claim that for small enough $\ep$,
\begin{align}
 \label{eq:GT}
 &\P(G_T^c) = o(\P(\Xmax > x)) && \tas x\to\infty,\\
\label{eq:finally}
 &\P(\DeltaSX > \delta(x)x,\,G_T) = o(\P(\Xmax > x))  && \tas x\to\infty.
\end{align}
Let us show how Equations \eqref{eq:MX_tail}, \eqref{eq:MX_tail_reg}, \eqref{eq:GT} and \eqref{eq:finally} together imply the theorem. First, by \eqref{eq:GT} and \eqref{eq:finally}, we have
\begin{equation}
\P(\DeltaSX > \delta(x)x) \le \P(\DeltaSX > \delta(x)x,\,G_T) + \P(G_T^c) = o(\P(\Xmax > x)),
\label{eq:addition}
\end{equation}
which implies in particular the last statement of the theorem. As for the other statements, note that we have
\[
\ton \{\DeltaSX \le \delta(x)x\},\quad \{\Xmax > (1+\delta(x))x\} \subset \{\Smax > x\} \subset \{\Xmax > (1-\delta(x))x\}.
\]
Hence, by \eqref{eq:addition}, \eqref{eq:MX_tail_reg} and \eqref{eq:MX_tail},
\[
\P(\Smax > x) \sim \P(\Xmax > x) = 1-g_V(1-\P(X>x)).
\]
The remaining statements of the theorem follow similarly.

It remains to prove \eqref{eq:GT} and \eqref{eq:finally}. Let us start with \eqref{eq:GT}. We have
\begin{align*}
 \P(G_T^c) \le \P(V > x^{\alpha+\ep}) + \P(H > V^{\frac 1 D + \ep},\,V > x^{\gamma})
\end{align*}
by Assumption (T1) and Karamata's Tauberian theorem\footnote{In the case $\beta = 1$, use that $\E[\min(X,x)] = \int_0^x \P(X > y)\,dy$ and that $\P(X>y)$ is decreasing in $y$.} (Fact~\ref{fact:karamata}), $\P(V\ge y)$ is dominated by a regularly varying function with index $-\beta$ at $+\infty$. Potter's bounds then give that the first summand is bounded by $x^{-\beta\alpha -\beta\ep/2}$ for large $x$. As for the second summand, by Assumption (T2) it is bounded by $x^{-\beta\gamma\gammabar}$ for large $x$, and $\gamma\gammabar > \alpha$ by definition of $\gamma$. Since $\P(\Xmax > x)$ is regularly varying with index $-\beta\alpha$, this readily implies \eqref{eq:GT}.

It remains to show \eqref{eq:finally}. We want to apply Corollary~\ref{cor:tree_bound_reg_var} and thus need to show:
\begin{equation}
\label{eq:xgebH}
 \forall \ep,\ep'>0\text{ small enough, }\forall x\ge 1, \text{ we have }x\ge b_H^{\ep'}\text{ on the event $G_T$,}
\end{equation}
where $b_H^{\ep'}$ is defined in \eqref{eq:bn}. Let $\eta$ be as in \eqref{eq:eta}, such that $b_H^{\ep'} = H^{\eta+\ep'}$.
Then, on the event $\{V\le x^{\gamma}\}$, since $H\le V$, we have $b_H^{\ep'} \le x^{\gamma(\eta+\ep')}$. Now note that $\gamma\eta < (1\wedge\alpha)/(1\wedge\alpha) = 1$. Hence, for $\ep'$ small enough and $x\ge 1$, we have $x\ge b_H^{\ep'}$ on the event $\{V\le x^{\gamma}\}$.

Now suppose that $H \le V^{\frac 1 D + \ep}$ and $V \le x^{\alpha+\ep}$, such that $H \le x^{\alpha/D + C\ep}$, with $C=1/D+\alpha+\ep$. By the assumption on $\alpha$ and the assumption $D>1$, we have \((\alpha/D)\eta < 1\). In particular, for every $\ep',\ep>0$ small enough, we have $b_H^{\ep'} \le x^{(\alpha/D + C\ep)(\eta+\ep')}\le x$ for $x\ge 1$. This proves \eqref{eq:xgebH}.

Now suppose for the rest of the proof that $\ep,\ep'>0$ are such that both 
\eqref{eq:GT}
and \eqref{eq:xgebH} hold. 
Then $\delta(x)x \ge b_H^{\ep''}$ on the event $G_T$ for large $x$, for every $\ep''<\ep'$. Corollary~\ref{cor:tree_bound_reg_var} then yields for every $\ep''<\ep'$, for large $x$, on the event $G_T$, with $C=C(\ep')$,
\[
 \P(\DeltaSX > \delta(x)x\,|\,T) \le CHV(\delta(x)x)^{-(2-\ep'')\alpha} \le CHV x^{-(2-2\ep'')\alpha}.
\]
Integrating over $T$ and using the fact that $H\le V$, this gives for every $\ep''<\ep'/2$, for large $x$,
\begin{equation}
\label{eq:hair}
 \P(\DeltaSX > \delta(x)x,\,G_T) \le C\E\left[V^2\Ind_{(V\le x^{\gamma})} + V^{\frac 1 D + 1 + \ep} \Ind_{(V\le x^{\alpha+\ep})}\right]x^{-(2-\ep'')\alpha}.
\end{equation}
Since $\P(V > y)$ is dominated by a regularly varying function with index $-\beta$ at $+\infty$ (see above), this gives for every $\widetilde \ep>0$ and every $y>0$ and $r>\beta$,
\[
 \E[V^r\Ind_{(V\le y)}] \le r \int_0^y z^{r-1}\P(V>z)\,dz \le Cr\int_0^y z^{r-1-\beta+\widetilde\ep}\,dz\le C\frac r {r-\beta} y^{r-\beta+\widetilde\ep}.
\]
Together with \eqref{eq:hair}, this gives for all $\ep>0$ small enough, all $\ep'>0$ and all $x$ large enough,
\[
  \P(\DeltaSX > \delta(x)x,\,G_T) \le C(x^{\gamma(2-\beta)-2\alpha} + x^{\alpha(\frac 1 D + 1 - \beta - 2)+C'\ep})x^{\ep'},
\]
with $C' = 1/D+1+\alpha+\ep$.
Now, since $\gamma < 1\wedge\alpha\le \alpha$, we have $\gamma(2-\beta)-2\alpha < -\beta\alpha$. Furthermore, since $D > 1$, we have $\alpha(\frac 1 D + 1 - \beta - 2) < -\beta\alpha$ as well. Choosing $\ep$ and $\ep'$ small enough in the previous inequality and using again the fact that $\P(\Xmax > x)$ is regularly varying with index $-\beta\alpha$ yields \eqref{eq:finally} and thus finishes the proof of the theorem.
%
%
%
\end{proof}

\section{Galton--Watson trees: proof of Proposition~\ref{prop:gw}}
\label{sec:gw}

Throughout the section, we denote by $T$ a critical Galton--Watson tree with offspring distribution $\mathbf p = (p_n)_{n\in\N}$ in the domain of attraction of an $\alphaGW$-stable law, $\alphaGW\in(1,2]$. We also denote by $H$ its height and by $V$ the number of its vertices\footnote{We include the root here because it makes the formulae below simpler. It is clear that this will not affect the validity of Assumptions (T1) and (T2).}. Since $\mathbf p = (p_n)_{n\in\N}$ is in the domain of attraction of an $\alphaGW$-stable law, there exists a sequence $a_n = L(n)n^{1/\alphaGW}$ with a slowly varying function $L$ such that $a_n^{-1} (Z_1+\cdots+Z_n - n)$ converges in distribution (to an $\alphaGW$-stable law), where $Z_1,Z_2,\ldots$ are iid copies of a random variable $Z$ with law $\mathbf p$. Note that this means that there exists\footnote{We could of course assume  w.l.o.g. $\lambda = 1$, but we keep the general form for convenience.} $\lambda > 0$, such that \cite[Theorem~2.6.1]{Zolotarev1986}
\begin{equation}
 \label{eq:laplace}
 \forall t\ge 0:\lim_{n\to\infty} \E[e^{-t(Z-1)/a_n}]^n = e^{\lambda t^{\alphaGW}}.
\end{equation}

For the second part of Proposition~\ref{prop:gw}, we will need asymptotics for the generating function $g_V$ of $V$. These are well-known, but we establish them here for completeness. Recall the following formula\footnote{The same formula is satisfied by the generating function of the hitting time $\tau$ of $-1$ of the left-continuous random walk $S_n = Z_1+\cdots+Z_n - n$, see e.g.~\cite[p234]{Spitzer1976}. In fact, it is well-known that $V$ and $\tau$ are equal in law; this follows from an encoding of the Galton--Watson tree through its \emph{{\L}ukasiewicz path}, see e.g.~\cite{LeGall2012}.} due to Good~\cite{Good1949}:
\begin{equation}
 \label{eq:good}
 g_V(s) = sg_Z(g_V(s)),\quad\text{for $|s| \le 1$},
\end{equation}
where $g_Z$ is the generating function of the random variable $Z$. It will be more useful to translate this formula in terms of log-Laplace transforms. For $t\ge 0$, let $\kappa_V(t) = \log \E[e^{-t V}]$ and $\kappa_{Z-1}(t) = \log \E[e^{-t(Z-1)}]$. Then \eqref{eq:good} becomes
\begin{equation}
 \label{eq:good2}
 \kappa_V = -\kappa_{Z-1}^{-1}\quad\text{on $[0,\infty)$,}
\end{equation}
where the existence of the inverse follows from simple convexity arguments.

Equation \eqref{eq:laplace} now gives
\[
 \forall t\ge 0:\quad \kappa_{Z-1}(t/a_n) \sim \lambda t^{\alphaGW}/n,\quad\tas n\to\infty,
\]
from which one easily sees that $\kappa_{Z-1}$ is regularly varying at zero\footnote{This can also be obtained from the characterization of the domain of attraction of stable laws in terms of truncated second moments and a tail balance condition \cite[Section XVII.5, Theorem 2]{Feller1971}, together with a Tauberian theorem \cite[Theorem 8.1.6]{Bingham1987}.} with index $\alphaGW$. Furthermore, setting $t=\lambda^{-1/\alphaGW}$ in the above equation and taking inverses on both sides of the equation gives
\begin{equation}
 \label{eq:kappa_inv}
 \kappa_{Z-1}^{-1}(t) \sim \frac{t^{1/\alphaGW}}{\lambda^{1/\alphaGW}L(1/t)},\quad\tas t\to\infty.
\end{equation}
Equations~\eqref{eq:good2} and \eqref{eq:kappa_inv} now readily yield\footnote{This formula can also be obtained by the so-called Kemperman's formula for the hitting time of a left-continuous random walk on $\Z$ (see \cite[p234]{Spitzer1976} or \cite{LeGall2012}) together with local limit theorems for random walks \cite[Theorem~4.2.1]{Ibragimov1971}, an explicit expression of the density at 0 of the $\alphaGW$-stable law (see e.g.~\cite[Section~2.2, Corollary~2]{Zolotarev1986}) and a Tauberian theorem. This way, one obtains several multiplicative factors involving the Gamma function, the sine function and the constant $\pi$ which mysteriously cancel by Euler's reflection formula for the Gamma function. In our opinion, the approach presented here is more transparent.}
\begin{equation}
 \label{eq:gV_GW}
1-g_V(1-s) \sim \frac{s^{1/\alphaGW}}{\lambda^{1/\alphaGW}L(1/s)},\quad\tas s\to 0.
\end{equation}
In particular, if $\sigma^2 = \operatorname{Var}(Z_1) < \infty$, then setting $\alphaGW = 2$, $L \equiv \sigma$ and $\lambda = 1/2$, we have
\begin{equation}
 \label{eq:gV_GW_finite_variance}
1-g_V(1-s) \sim \frac{\sqrt 2}{\sigma} \sqrt{s},\quad\tas s\to 0.
\end{equation}
%
%

We can now turn to the proof of Proposition~\ref{prop:gw}.

\begin{proof}[Proof of Proposition~\ref{prop:gw}]
The first part follows directly from existing results in the literature: It is known\footnote{See \cite[Theorem~3.1]{Duquesne2003}. The assumption of aperiodicity in that paper is not needed.} that conditioned on $V=n$, the random variable $(a_n/ n) H$ converges in law to a non-degenerate random variable, as $n\to\infty$.
Since $n/a_n = n^{1-1/\alphaGW+o(1)} = n^{1/D+o(1)}$, this immediately proves the first part of the proposition.

We now turn to the second part of the proposition. Assumption (T1) follows directly from \eqref{eq:gV_GW}. It remains to check that (T2) holds for every $\gammabar\in\R$. This is in fact a direct consequence of the results in \cite{Kortchemski2015} on the height of Galton--Watson trees conditioned on its number of vertices. However, since we only need weaker results, we present here for completeness a simple and transparent proof. We recall the following construction due to Geiger \cite{Geiger1999} of the Galton--Watson tree conditioned on the event that its height is at least $k\in\N$. Define $c_i = \P(H \ge i-1)/\P(H\ge i)$. Let $(A_1,B_1),\ldots,(A_{k},B_{k})$ be independent pairs of integer r.v. with law
\[
 \P(A_i = a,\,B_i = b) = c_ip_b\P(H < i-1)^{a-1}\Ind_{(1\le a\le b)}.
\]
Then the tree $T$ conditioned on $H\ge k$ can be constructed as follows:
\begin{itemize}
 \item There exists a ray of length $k$, called the \emph{spine}, starting from the root.
 \item The vertex on the spine at generation/depth/height $i$, $i=0,\ldots,k-1$, has $B_{k-i}-1$ children off the spine, out of which, independently,
 \begin{itemize}
  \item $A_{k-i} - 1$ children spawn copies of $T$ conditioned on $H < k-(i+1)$,
  \item $B_{k-i} - A_{k-i}$ children spawn copies of $T$.
 \end{itemize}
 \item The vertex on the spine at generation $k$ spawns a copy of $T$.
\end{itemize}

Ignoring in the above construction the $A_{k-i}-1$ copies of $T$ conditioned on $H < k-(i+1)$ in each generation, we obtain that the number of vertices in the above tree is stochastically bounded from below by the sum of $\sum_{i=1}^k B_i-A_i$ copies of $V$. Hence, if $V^{(1)},V^{(2)},\ldots$ denote iid copies of $V$, then
\begin{equation}
 \label{eq:one}
 \P(V \le n\,|\,H\ge k) \le \P\left(\max_{j=1,\ldots,\sum_{i=1}^k B_i-A_i} V^{(j)} \le n\right) = g_{\sum_{i=1}^k B_i-A_i}(\P(V \le n)),
\end{equation}
where for a random variable $Y$ we denote by $g_Y$ its generating function.
Now let $(A,B)$ be a pair of integer random variables such that
\[
 \P(A = a,B = b) = p_b\Ind_{(1\le a\le b)},
\]
so that $B$ is distributed according to the size-biased distribution of $\mathbf p$ and conditionally on~$B$, $A$ is uniform in $\{1,\ldots,B\}$ (this law also appears in \cite{Geiger1999}). For every $s\in[0,1]$, we then have by the definition of $(A_i,B_i)_{i=1,\ldots,k}$,
\begin{align}
 \nonumber
 g_{\sum_{i=1}^k B_i-A_i}(s) &= \prod_{i=1}^k \sum_{j=0}^\infty \P(B_i-A_i = j) s^j\\
 \nonumber
 &\le \left(\prod_{i=1}^k c_i\right) \left(\sum_{j=0}^\infty \P(B-A = j) s^j\right)^k\\
 \label{eq:two}
 &= \frac 1 {\P(H\ge k)} g_{B-A}(s)^k.
\end{align}
We have,
\begin{align*}
 g_{B-A}(s) &= \sum_{j=0}^\infty \P(B-A = j) s^j = \sum_{j=0}^\infty \left(\sum_{l=j+1}^\infty p_l\right) s^j = (1-s)^{-1}\sum_{l=0}^\infty p_l (1-s^l) \\
 &= (1-s)^{-1}(1-g_Z(s)).
\end{align*}
Now, since $\kappa_{Z-1}(t)$ is regularly varying at zero with index $\alphaGW$ (see above), we have
\[
 g_Z(s) = s + (1-s)^{\alphaGW+o(1)} = 1 - (1-s) + (1-s)^{\alphaGW+o(1)},\quad \tas s\to 1.
\]
Hence,
\begin{equation}
\label{eq:three}
 g_{B-A}(s) = 1- (1-s)^{\alpha_T-1+o(1)},\quad \tas s\to 1.
\end{equation}
Equations~\eqref{eq:one}, \eqref{eq:two} and \eqref{eq:three} now give for large $n$,
\begin{align*}
 \P(V\le n,\,H \ge k) &\le g_{B-A}(\P(V\le n))^k = (1-\P(V>n)^{\alpha_T-1+o(1)})^k\\
 &\le \exp(-k\P(V>n)^{\alpha_T-1+o(1)}),
\end{align*}
such that by \eqref{eq:gV_GW} and Fact~\ref{fact:karamata},
\begin{equation}
 \label{eq:VHnk}
 \P(V\le n,\,H \ge k) \le \exp(-k n^{-1/D+o(1)}),\quad\tas n\to\infty.
\end{equation}

Now fix $\ep>0$ and $\rho > 1$. We have with $k = k(n) = n^{1/(1/D+\ep)}$,
\begin{align*}
 \P(H > V^{\frac 1 D + \ep},\,V \ge n) &\le  \P(H > V^{\frac 1 D + \ep},\,H \ge k)\\
 & = \sum_{i=0}^\infty \P(H > V^{\frac 1 D + \ep},\,k^{\rho^i}\le H < k^{\rho^{i+1}})\\
 &\le \sum_{i=0}^\infty \P(V < k^{\rho^{i+1}/(1/D+\ep)},\,k^{\rho^i}\le H < k^{\rho^{i+1}})\\
 &\le \sum_{i=0}^\infty \P(V < (k^{\rho^i})^{\rho/(1/D+\ep)},\,k^{\rho^i}\le H)\\
 &\le \sum_{i=0}^\infty \exp(-k^{\rho^i} k^{-\rho^i(\rho/(1+D\ep)+o(1))}) &&\text{(by \eqref{eq:VHnk}).}
\end{align*}
Choosing $\rho < 1+D\ep$ yields for some $\eta > 0$,
\begin{equation*}
\P(H > V^{\frac 1 D + \ep},\,V \ge n) \le  \exp(-n^{\eta}),\quad\text{for large $n$.}
\end{equation*}
This proves (T2) for every $\gammabar \in\R$.
\end{proof}

\appendix
\section{Regularly varying functions}
\label{sec:reg-var}

We recall here known facts about regularly varying functions.

\begin{definition*}
A function $f:\R_+\to\R$ is said to \emph{vary regularly at $\infty$ (at zero) with index $\lambda\in\R$} if for every $y>0$,
\[
 \frac{f(xy)}{f(x)} \to y^\lambda,\quad\tas x\to \infty\ (x\to0).
\]
If $\lambda=0$, the function is also called \emph{slowly varying}. The definition is extended to sequences by linear interpolation (say).
\end{definition*}
It follows readily from the definition that the composition of regularly varying functions is again a regularly varying function (whose index is the product of the indices).

The following result is often used in this article, sometimes without mentioning it explicitly:
\begin{fact}[Potter's bounds {\cite[Theorem~1.5.6]{Bingham1987}}]
\label{fact:potter}
 If $f$ is regularly varying of index $\lambda$ at $\infty$, then for every $C>1$ and $\delta > 0$ there exists $x_0 = x_0(C,\delta)$, such that 
\[
\frac{f(y)}{f(x)} \le C\max((y/x)^{\lambda+\delta},(y/x)^{\lambda-\delta}),\quad \text{for all $x\ge x_0$, $y\ge x_0.$}
\]
\end{fact}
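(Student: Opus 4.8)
The plan is to reduce the statement to the case of a slowly varying function and then read it off from a structural description of such functions. Since $f$ is regularly varying, we may (replacing $f$ by $-f$ if needed, as is standard) assume $f>0$ for large $x$, and write $f(x)=x^\lambda\ell(x)$ with $\ell$ slowly varying and positive near $\infty$. Because $f(y)/f(x)=(y/x)^\lambda\,\ell(y)/\ell(x)$ and $(y/x)^\lambda>0$, multiplying through by $(y/x)^\lambda$ turns a bound of the form $\ell(y)/\ell(x)\le C\max((y/x)^\delta,(y/x)^{-\delta})$ into exactly $f(y)/f(x)\le C\max((y/x)^{\lambda+\delta},(y/x)^{\lambda-\delta})$. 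Hence it suffices to prove the assertion in the slowly varying case $\lambda=0$.

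For that case I would invoke the Karamata representation theorem \cite{Bingham1987}: there are $A>0$, a measurable function $c(\cdot)$ with $c(x)\to c_\infty\in(0,\infty)$, and a measurable function $\varepsilon(\cdot)$ with $\varepsilon(t)\to 0$ as $t\to\infty$, such that
\[
 \ell(x)=c(x)\exp\!\left(\int_A^x\frac{\varepsilon(t)}{t}\,\dd t\right),\qquad x\ge A.
\]
Given $C>1$ and $\delta>0$, pick $x_0\ge A$ large enough that $|\varepsilon(t)|\le\delta$ for all $t\ge x_0$ and that $c(y)/c(x)\le C$ for all $x,y\ge x_0$; both are possible from the two limits, and crucially $x_0$ depends only on $C$ and $\delta$. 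For $x_0\le x\le y$,
\[
 \frac{\ell(y)}{\ell(x)}=\frac{c(y)}{c(x)}\exp\!\left(\int_x^y\frac{\varepsilon(t)}{t}\,\dd t\right)\le C\exp\!\left(\delta\int_x^y\frac{\dd t}{t}\right)=C\Big(\frac y x\Big)^{\delta},
\]
and for $x_0\le y\le x$ the same computation with the roles of $x$ and $y$ reversed gives $\ell(y)/\ell(x)\le C(x/y)^{\delta}=C(y/x)^{-\delta}$. Taking the larger of the two bounds yields $\ell(y)/\ell(x)\le C\max((y/x)^\delta,(y/x)^{-\delta})$ for all $x,y\ge x_0$, which is what was needed.

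If one prefers to avoid quoting the representation theorem, the alternative is to derive the same estimate from the uniform convergence theorem for slowly varying functions \cite{Bingham1987}, namely that $\ell(\mu x)/\ell(x)\to 1$ uniformly for $\mu$ in compact subsets of $(0,\infty)$: this controls $\ell(y)/\ell(x)$ on the range $1\le y/x\le 2$ by a constant arbitrarily close to $1$, and one then chains over the dyadic blocks $[2^kx,2^{k+1}x]$ to cover larger ratios, the accumulated factor being a fixed power of $y/x$ with exponent as small as one likes; the case $y<x$ is symmetric. Either way, the one genuinely nontrivial ingredient is the structural result about slowly varying functions — the Karamata representation or the uniform convergence theorem — each of which requires a real argument (Karamata's integral trick, respectively an Egorov/Baire-category argument); granting one of these, Potter's bounds reduce to the elementary bookkeeping above, the only delicate point being that $x_0$ must be chosen uniformly, depending on $C$ and $\delta$ but not on $x,y$.
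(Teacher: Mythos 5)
Your proof is correct, but note that the paper does not actually prove this fact: it simply cites it as \cite[Theorem~1.5.6]{Bingham1987}, so there is no paper argument to compare against. What you wrote is the standard textbook derivation. The reduction $f(x)=x^\lambda\ell(x)$ to the slowly varying case is clean, the application of the Karamata representation is the classical route, and the choice of $x_0$ uniformly in $C,\delta$ via the two limits $\varepsilon(t)\to0$, $c(x)\to c_\infty\in(0,\infty)$ is exactly where the content lies; your two cases $x\le y$ and $y\le x$ combine correctly into the stated $\max$. The alternative via the uniform convergence theorem and dyadic chaining is also sound in outline, though as stated one should be slightly careful that the chaining produces a factor $(1+\epsilon)(y/x)^{\log_2(1+\epsilon)}$, which requires choosing $\epsilon$ small enough that both $1+\epsilon\le C$ and $\log_2(1+\epsilon)\le\delta$ — you gesture at this but do not spell it out. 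Either argument legitimately establishes the fact, and your observation that the only nontrivial input is a structural theorem about slowly varying functions is correct.
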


Regularly varying functions play an important role in Tauberian theorems, of which we will use the following form:
\begin{fact}[Karamata's Tauberian theorem, extended form {\cite[Corollary~8.1.7]{Bingham1987}}]
\label{fact:karamata}
 Let $X$ be a random variable taking values in $[0,\infty)$ and define $g_X(s) = \E[s^X]$, $s\in[0,1]$. Let $\beta\in[0,1]$ and $L$ be a slowly varying function at $\infty$. Then the following are equivalent:
 \begin{itemize}
  \item $1-g_X(s) \sim (1-s)^\beta L(1/(1-s))$, $\tas s\uparrow 1$.
  \item If $\beta\in[0,1)$,
  \[
   P(X > x) \sim \frac {L(x)}{x^\beta \Gamma(1-\beta)},\quad\tas x\to\infty.
  \]
  If $\beta = 1$, either $\E[\min(X,x)] \sim L(x)$ or $\E[X\Ind_{(X\le x)}] \sim L(x)$, as $x\to\infty$ (in which case both occur).
  \end{itemize}
\end{fact}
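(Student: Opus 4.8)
The plan is to reduce the stated equivalence to three classical results on regularly varying functions found in \cite{Bingham1987}: the Hardy--Littlewood--Karamata Tauberian theorem for Laplace--Stieltjes transforms of nondecreasing functions, the monotone density theorem, and Karamata's theorem on integration of regularly varying functions. The only work specific to the present statement is a change of variables and the treatment of the boundary exponent $\beta=1$. First I would pass from the generating function to a Laplace transform: writing $s=e^{-t}$, one has $1-s\sim t$ as $s\uparrow1$, and since $L$ is slowly varying, $L(1/(1-s))\sim L(1/t)$ by the uniform convergence theorem, so the first condition of the Fact is equivalent to $1-\phi(t)\sim t^\beta L(1/t)$ as $t\downarrow0$, where $\phi(t)=\E[e^{-tX}]$. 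The elementary identity at the heart of the argument, obtained by Fubini from $1-e^{-tx}=t\int_0^x e^{-tu}\,\dd u$, is
\[
 1-\phi(t)=\int_0^\infty(1-e^{-tx})\,\dd F(x)=t\int_0^\infty e^{-tu}\,\widebar F(u)\,\dd u=t\,\widehat G(t),
\]
where $\widebar F(u)=\P(X>u)$, $G(x)=\int_0^x\widebar F(u)\,\dd u=\E[\min(X,x)]$, and $\widehat G$ is the Laplace--Stieltjes transform of the nondecreasing function $G$ (note $\dd G(u)=\widebar F(u)\,\dd u$).

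I would then apply the classical Tauberian theorem to $G$ with exponent $\rho=1-\beta\ge0$ (which is where the hypothesis $\beta\le1$ enters): it states that $\widehat G(t)\sim t^{-\rho}L(1/t)$ as $t\downarrow0$ is equivalent to $G(x)\sim x^{\rho}L(x)/\Gamma(1+\rho)$ as $x\to\infty$. Combined with $\widehat G(t)=(1-\phi(t))/t$, the first condition of the Fact becomes equivalent to
\[
 \E[\min(X,x)]=G(x)\sim\frac{x^{1-\beta}L(x)}{\Gamma(2-\beta)}\qquad\tas x\to\infty.
\]
When $\beta=1$ this reads $\E[\min(X,x)]\sim L(x)$, which is one of the two equivalent alternatives in the Fact. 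To obtain the other one, and their mutual equivalence, I would use that $\widebar F$ is nonincreasing: $x\widebar F(2x)\le G(2x)-G(x)=o(G(x))$ because $G$ is slowly varying, whence $x\widebar F(x)=o(L(x))$; since $\E[X\Ind_{(X\le x)}]=G(x)-x\widebar F(x)$, the two truncated moments differ by $o(L(x))$ and are therefore asymptotically equivalent to each other and to $L(x)$.

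For $\beta\in[0,1)$ it remains to pass between the asymptotics of $G(x)$ and of $\widebar F(x)=\P(X>x)$. Since $G$ is regularly varying of index $1-\beta>0$ with nonincreasing density $\widebar F$, the monotone density theorem gives $\widebar F(x)\sim(1-\beta)G(x)/x\sim(1-\beta)x^{-\beta}L(x)/\Gamma(2-\beta)=x^{-\beta}L(x)/\Gamma(1-\beta)$, using $\Gamma(2-\beta)=(1-\beta)\Gamma(1-\beta)$; this is precisely the Fact's conclusion. Conversely, if $\widebar F(x)\sim x^{-\beta}L(x)/\Gamma(1-\beta)$, then $\widebar F$ is regularly varying of index $-\beta>-1$, so Karamata's integration theorem yields $G(x)=\int_0^x\widebar F(u)\,\dd u\sim x\widebar F(x)/(1-\beta)\sim x^{1-\beta}L(x)/\Gamma(2-\beta)$, which closes the loop back to the generating-function condition. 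The main obstacle is, frankly, that the substance of the statement is the classical Karamata Tauberian theorem itself, which I take as known; what genuinely remains is the careful bookkeeping of the slowly varying factors through these reductions and the separate handling of the endpoint $\beta=1$, where the monotone density theorem does not apply and one is forced into the truncated-moment formulation.
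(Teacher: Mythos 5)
The paper does not prove this Fact at all: it is imported verbatim as Corollary~8.1.7 of Bingham--Goldie--Teugels and used as a black box, so there is no ``paper's own proof'' to compare against. Your reduction is, in fact, essentially the standard textbook derivation that underlies that corollary: pass from $g_X$ to the Laplace transform $\phi$, use the Fubini identity $1-\phi(t)=t\int_0^\infty e^{-tu}\P(X>u)\,\dd u$ to identify $1-\phi(t)=t\widehat G(t)$ with $G(x)=\E[\min(X,x)]$ nondecreasing, apply the Hardy--Littlewood--Karamata Tauberian theorem to $G$ with exponent $1-\beta\ge0$, and then pass between $G$ and $\widebar F$ by the monotone density theorem and Karamata's integration theorem when $1-\beta>0$. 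All the bookkeeping of the slowly varying factors and the constant $\Gamma(2-\beta)=(1-\beta)\Gamma(1-\beta)$ checks out, and the observation that the endpoint $\beta=1$ must be handled through truncated moments because the monotone density theorem degenerates there is exactly right.

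There is, however, one small gap at the endpoint $\beta=1$. Your chain establishes that the generating-function condition is equivalent to $G(x)=\E[\min(X,x)]\sim L(x)$, and that this in turn forces $x\widebar F(x)=o(L(x))$ and hence $\E[X\Ind_{(X\le x)}]=G(x)-x\widebar F(x)\sim L(x)$. That covers the implication from the transform condition to \emph{both} truncated-moment asymptotics, and the converse starting from $G\sim L$. But the Fact asserts an ``either/or,'' so you also need the converse starting from $\E[X\Ind_{(X\le x)}]\sim L(x)$ alone, and your bound $x\widebar F(2x)\le G(2x)-G(x)=o(G(x))$ presupposes that $G$ is slowly varying, which is precisely what is not yet known in that direction. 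The clean way to close this is to apply the same Tauberian theorem to the nondecreasing function $H(x)=\E[X\Ind_{(X\le x)}]$, whose Laplace--Stieltjes transform is $\widehat H(t)=-\phi'(t)$; then $H\sim L$ is equivalent to $-\phi'(t)\sim L(1/t)$ as $t\downarrow0$, and integrating the monotone function $-\phi'$ on $(0,t)$ gives $1-\phi(t)\sim tL(1/t)$, which is the $\beta=1$ transform condition. With that supplement the argument is complete.
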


\section*{Acknowledgements} 
I thank Nicolas Curien for pointing out reference \cite{Janson2005} and anonymous referee for several useful suggestions that improved the presentation.

\bibliography{tree-indexed_RW_heavy_tails}
\end{document}